\documentclass[10pt]{extarticle}

\usepackage[english]{babel}
\usepackage{graphicx}
\usepackage{framed}
\usepackage[normalem]{ulem}
\usepackage{indentfirst,ragged2e}
\usepackage{amsmath,amsthm,amssymb,amsfonts,wasysym,verbatim,bbm}
\usepackage{mathtools}
\usepackage{esint}
\usepackage{cancel}
\usepackage[italicdiff]{physics}
\usepackage[T1]{fontenc}
\usepackage{lmodern,mathrsfs}
\usepackage[dvipsnames]{xcolor}
\usepackage{nicematrix}
\usepackage{wrapfig}
\usepackage[inline,shortlabels]{enumitem}
\setlist{topsep=2pt,itemsep=2pt,parsep=0pt,partopsep=0pt}
\usepackage[utf8]{inputenc}
\usepackage[a4paper,top=0.7in,bottom=0.7in,left=0.5in,right=0.5in]{geometry}
\usepackage{multicol}
\usepackage{diagbox}
\usepackage[most]{tcolorbox}
\usepackage{tikz,tikz-3dplot,tikz-cd,tkz-tab,tkz-euclide,tikzsymbols,pgf,pgfplots}
\pgfplotsset{compat=newest}
\usepgfplotslibrary{fillbetween}
\usepackage{subfiles}
\graphicspath{{images/}{../images/}} 
\usepackage[backend=bibtex,style=numeric]{biblatex}
\usepackage{csquotes} 
\usepackage[colorlinks,linkcolor=.,citecolor=ForestGreen,urlcolor=RedViolet]{hyperref}
\usepackage[nameinlink]{cleveref}

\newcommand{\hide}[1]{} 

\usepackage[showisoZ=false]{datetime2} 

\DTMnewdatestyle{newdate}{

}

\newcommand*{\utcpm}[1]{
\ifboolexpe{test{\ifnumequal{#1}{0}}}{}{\ifnum#1<0\else+\fi{#1}}}
\DTMnewzonestyle{newzone}{
    
}
\AtBeginDocument{
    \DTMsetdatestyle{newdate}
    \DTMsetzonestyle{newzone}
}

\usepackage{fancyhdr}
\DeclareDocumentCommand\ip{ l m }{\braces#1{\langle}{\rangle}{#2}} 
\DeclareDocumentCommand\floor{ l m }{\braces#1{\lfloor}{\rfloor}{#2}} 
\DeclareDocumentCommand\ceil{ l m }{\braces#1{\lceil}{\rceil}{#2}} 

\docsvlist{A,B,C,D,F,G,I,J,K,M,N,Q,R,T,U,V,W,X,Y,Z}

\docsvlist{A,B,C,D,E,F,G,H,I,J,K,L,M,N,O,P,Q,R,S,T,U,V,W,X,Y,Z}

\docsvlist{a,b,c,d,e,f,g,i,j,k,l,m,n,o,p,q,r,s,t,u,v,w,x,y,z,A,B,C,D,E,F,G,H,I,J,K,L,M,N,O,P,Q,R,S,T,U,V,W,X,Y,Z}

\docsvlist{a,b,c,d,e,f,g,h,i,j,k,l,m,n,o,q,r,t,u,w,x,y,z,A,B,C,D,E,F,G,H,I,J,K,L,M,N,O,P,Q,R,S,T,U,V,W,X,Y,Z}

\docsvlist{a,b,c,d,e,f,g,h,i,j,k,l,m,n,o,p,q,r,s,t,u,v,w,x,y,z,A,B,C,D,E,F,G,H,I,J,K,L,M,N,O,P,Q,R,S,T,U,V,W,X,Y,Z}

\newcommand{\limk}{\lim_{k\to\infty}}

\newcommand{\limn}{\lim_{n\to\infty}}

\newcommand{\sumn}[1][1]{\sum_{n=#1}^\infty}

\newcommand*{\frontone}[1]{\textcolor{red!80!black}{#1}}
\newcommand*{\fronttwo}[1]{\textcolor{Green}{#1}}
\newcommand*{\frontend}[1]{\textcolor{blue!80!black}{#1}}

\newsavebox{\imaginarybox}

\newtheoremstyle{mytheoremstyle}{}{}{}{}{\sffamily\bfseries}{.}{ }{}
\newtheoremstyle{myconjecturestyle}{}{}{}{}{\sffamily\bfseries}{.}{ }{\thmnote{#3}}
\makeatletter
\renewenvironment{proof}[1][\proofname] {\par\pushQED{\qed}{\normalfont\sffamily\bfseries\topsep6\p@\@plus6\p@\relax #1\@addpunct{.} }}{\popQED\endtrivlist\@endpefalse}
\makeatother

\theoremstyle{mytheoremstyle}{\newtheorem{definition}{Definition}[section]}
\theoremstyle{mytheoremstyle}{\newtheorem{proposition}[definition]{Proposition}}
\theoremstyle{mytheoremstyle}{\newtheorem{theorem}[definition]{Theorem}}
\theoremstyle{mytheoremstyle}{\newtheorem{lemma}[definition]{Lemma}}
\theoremstyle{mytheoremstyle}{\newtheorem{corollary}[definition]{Corollary}}
\theoremstyle{mytheoremstyle}{\newtheorem*{remark}{Remark}}
\theoremstyle{mytheoremstyle}{\newtheorem*{remarks}{Remarks}}
\theoremstyle{mytheoremstyle}{\newtheorem*{example}{Example}}
\theoremstyle{mytheoremstyle}{}
\theoremstyle{myconjecturestyle}{}

\definecolor{tcol_DEF}{HTML}{E40125} 
\definecolor{tcol_PRP}{HTML}{EB8407} 
\definecolor{tcol_LEM}{HTML}{05C4D9} 
\definecolor{tcol_THM}{HTML}{1346E4} 
\definecolor{tcol_COR}{HTML}{7904C2} 
\definecolor{tcol_EXA}{HTML}{21340A} 
\definecolor{tcol_REM}{HTML}{18B640} 
\definecolor{tcol_PRF}{HTML}{5A76B2} 

\tcbset{
    tbox_DPLTC_style/.style={
        enhanced jigsaw,
        colback=#1!10, colframe=#1!80!black,
        boxrule=0pt,
        fonttitle=\sffamily\bfseries, coltitle=black,
        separator sign={}, label separator={},
        description font=\normalfont\sffamily,
        description delimiters={(}{)},
        attach title to upper, after title={.\ },
        frame hidden, borderline west={2pt}{0pt}{#1},
        sharp corners,
        top=2pt, bottom=2pt, left=5pt, right=5pt,
        beforeafter skip=10pt, breakable,
    },
    tbox_DPLTC_style*/.style={
        tbox_DPLTC_style=#1,
        interior hidden,
        top=-2pt, bottom=-2pt,
    },
}

\tcolorboxenvironment{definition}{tbox_DPLTC_style=tcol_DEF}
\tcolorboxenvironment{proposition}{tbox_DPLTC_style=tcol_PRP}
\tcolorboxenvironment{theorem}{tbox_DPLTC_style=tcol_THM}
\tcolorboxenvironment{lemma}{tbox_DPLTC_style=tcol_LEM}
\tcolorboxenvironment{corollary}{tbox_DPLTC_style=tcol_COR}
\tcolorboxenvironment{proof}{boxrule=0pt,boxsep=0pt,blanker,borderline west={2pt}{0pt}{CadetBlue!80!white},left=8pt,right=8pt,sharp corners,before skip=10pt,after skip=10pt,breakable}
\tcolorboxenvironment{remark}{tbox_DPLTC_style*=tcol_REM}
\tcolorboxenvironment{remarks}{tbox_DPLTC_style*=tcol_REM}
\tcolorboxenvironment{example}{tbox_DPLTC_style*=tcol_EXA}
\tcolorboxenvironment{examples}{tbox_DPLTC_style*=tcol_EXA}
\tcolorboxenvironment{cthm}{boxrule=0pt,boxsep=0pt,colback={gray!10},left=8pt,right=8pt,enhanced jigsaw, borderline west={2pt}{0pt}{gray},sharp corners,before skip=10pt,after skip=10pt,breakable}

\AddToHook{env/proposition/begin}{\crefalias{definition}{proposition}}
\AddToHook{env/lemma/begin}{\crefalias{definition}{lemma}}
\AddToHook{env/theorem/begin}{\crefalias{definition}{theorem}}
\AddToHook{env/corollary/begin}{\crefalias{definition}{corollary}}

\crefname{definition}{definition}{definitions}
\crefformat{definition}{\normalfont\sffamily\bfseries\color{tcol_DEF} #2definition~#1#3}
\crefrangeformat{definition}{\normalfont\sffamily\bfseries\color{tcol_DEF} #3definitions~#1#4 {\normalfont\color{black}and} #5#2#6}
\Crefformat{definition}{\normalfont\sffamily\bfseries\color{tcol_DEF} #2Definition~#1#3}
\Crefrangeformat{definition}{\normalfont\sffamily\bfseries\color{tcol_DEF} #3Definitions~#1#4 {\normalfont\color{black}and} #5#2#6}

\crefname{proposition}{proposition}{propositions}
\crefformat{proposition}{\normalfont\sffamily\bfseries\color{tcol_PRP} #2proposition~#1#3}
\crefrangeformat{proposition}{\normalfont\sffamily\bfseries\color{tcol_PRP} #3propositions~#1#4 {\normalfont\color{black}and} #5#2#6}
\Crefformat{proposition}{\normalfont\sffamily\bfseries\color{tcol_PRP} #2Proposition~#1#3}
\Crefrangeformat{proposition}{\normalfont\sffamily\bfseries\color{tcol_PRP} #3Propositions~#1#4 {\normalfont\color{black}and} #5#2#6}

\crefname{lemma}{lemma}{lemmas}
\crefformat{lemma}{\normalfont\sffamily\bfseries\color{tcol_LEM} #2lemma~#1#3}
\crefrangeformat{lemma}{\normalfont\sffamily\bfseries\color{tcol_LEM} #3lemmas~#1#4 {\normalfont\color{black}and} #5#2#6}
\Crefformat{lemma}{\normalfont\sffamily\bfseries\color{tcol_LEM} #2Lemma~#1#3}
\Crefrangeformat{lemma}{\normalfont\sffamily\bfseries\color{tcol_LEM} #3Lemmas~#1#4 {\normalfont\color{black}and} #5#2#6}

\crefname{theorem}{theorem}{theorems}
\crefformat{theorem}{\normalfont\sffamily\bfseries\color{tcol_THM} #2theorem~#1#3}
\crefrangeformat{theorem}{\normalfont\sffamily\bfseries\color{tcol_THM} #3theorems~#1#4 {\normalfont\color{black}and} #5#2#6}
\Crefformat{theorem}{\normalfont\sffamily\bfseries\color{tcol_THM} #2Theorem~#1#3}
\Crefrangeformat{theorem}{\normalfont\sffamily\bfseries\color{tcol_THM} #3Theorems~#1#4 {\normalfont\color{black}and} #5#2#6}

\crefname{corollary}{corollary}{corollaries}
\crefformat{corollary}{\normalfont\sffamily\bfseries\color{tcol_COR} #2corollary~#1#3}
\crefrangeformat{corollary}{\normalfont\sffamily\bfseries\color{tcol_COR} #3corollaries~#1#4 {\normalfont\color{black}and} #5#2#6}
\Crefformat{corollary}{\normalfont\sffamily\bfseries\color{tcol_COR} #2Corollary~#1#3}
\Crefrangeformat{corollary}{\normalfont\sffamily\bfseries\color{tcol_COR} #3Corollaries~#1#4 {\normalfont\color{black}and} #5#2#6}

\newenvironment{talign*}{\let\displaystyle\textstyle\csname align*\endcsname}{\endalign}

\makeatletter
\def\fsize{\dimexpr\f@size pt\relax}
\makeatother

\usepackage[explicit]{titlesec}
\titleformat{\section}{\normalfont\sffamily\Large\bfseries}{\thesection}{12pt}{#1}
\titleformat{\subsection}{\normalfont\sffamily\large\bfseries}{\thesubsection}{12pt}{#1}
\titleformat{\subsubsection}{\normalfont\sffamily\large\bfseries}{\thesubsection}{8pt}{#1}

\titlespacing*{\section}{0pt}{5pt}{5pt}
\titlespacing*{\subsection}{0pt}{5pt}{5pt}
\titlespacing*{\subsubsection}{0pt}{5pt}{5pt}

\newcommand{\Disp}{\displaystyle}

\DeclareMathAlphabet\mathbfcal{OMS}{cmsy}{b}{n}
\makeatletter
\g@addto@macro\normalsize{
\setlength\abovedisplayskip{3pt}
\setlength\belowdisplayskip{3pt}
\setlength\abovedisplayshortskip{0pt}
\setlength\belowdisplayshortskip{0pt}}
\makeatother

\makeatletter
\renewcommand\maketitle{
    \null\vspace{6mm}
    \begin{center}
        {\Huge\sffamily\bfseries\selectfont\@title}\\
            \vspace{6mm}
        {\Large\sffamily\selectfont\@author}\\
            \vspace{6mm}
        {\large\sffamily\selectfont\@date}
    \end{center}
    \vspace{6mm}
}
\makeatother

\definecolor{abstract_color}{HTML}{0A0848}
\renewenvironment{abstract}{
\begin{tcolorbox}[
    enhanced, width=6in, center, title={Abstract},
    frame hidden, colback=white, colbacktitle=white,
    fonttitle=\sffamily\bfseries, coltitle=black,
    borderline north={1.5pt}{0pt}{abstract_color},
    attach boxed title to top center={
        yshift=-0.25mm-\tcboxedtitleheight/2, yshifttext=2mm-\tcboxedtitleheight/2
    },
    boxed title style={
        boxrule=0.5mm,
        frame code={
            \path[tcb fill frame, abstract_color] ([xshift=-4mm]frame.west)--(frame.north west)--(frame.north east)--([xshift=4mm]frame.east)--(frame.south east)--(frame.south west)--cycle;
        },
        interior code={
            \path[tcb fill interior] ([xshift=-2mm]interior.west)--(interior.north west)--(interior.north east)--([xshift=2mm]interior.east)--(interior.south east)--(interior.south west)--cycle;
        }
    },
    top=\tcboxedtitleheight/2, bottom=0pt, left=0pt, right=0pt]
    }
{\end{tcolorbox}}

\title{Counting words without non-decreasing subwords of fixed length}
\author{Senan Sekhon}
\date{\DTMToday}

\begin{document}

\thispagestyle{empty}

\maketitle

\tableofcontents

\begin{abstract}
    In \cite{sekhon}, we derived exact formulas for generating functions counting the number of $n$-ary words avoiding \textit{strictly} increasing subwords of length $k$, and provided applications in probability theory as well as the continuous limit as $n\to\infty$. We also conjectured several corresponding formulas for the case where the ``strictly'' requirement is dropped. In this paper, we prove those formulas.
\end{abstract}


\vspace{10pt}

\begin{tcolorbox}[
    colframe=black,
    title={Summary of Results},
    fonttitle=\large\sffamily\bfseries,
    center title,
]
    \subsubsection*{Terminology}
    
    Define the following:
    \begin{itemize}
        \item An \textbf{$n$-ary word} of \textbf{length} $s$ is a tuple $(a_1,a_2,\ldots,a_s)$ of $s$ elements chosen from $\{1,2,\ldots,n\}$.
        \item A \textbf{subword} of a word $(a_1,a_2,\ldots,a_s)$ is a contiguous tuple of the form $(a_i,a_{i+1},\ldots,a_j)$ where $1\le i<j\le s$.
        \item A \textbf{soft streak of length $k$} in a word is a non-deincreasing subword of length $k$.
        \item $\Ps(n,k,s)$ is the number of $n$-ary words of length $s$ that do not contain any soft streak of length $k$.
        \item $f_{n,k}(z)$ is the generating function of $\Ps(n,k,s)$, i.e. $f_{n,k}(z)=\sum_{s=0}^\infty \Ps(n,k,s)z^s$.
    \end{itemize}

    \vspace{10pt}
    
    Then $f_{n,k}(z)$ is given by:
    \begin{equation*}
        \tcbhighmath[
            colback=white, colframe=JungleGreen,
        ]{
            f_{n,k}(z)
            = \frac{\qty(1-z^k)^n}{\Disp\sum_{r=0}^{(k-1)n} \psi_{k,r}\Bs(n,k,r)z^r}
            = \frac{k}{\Disp\sum_{r=1}^{k-1} \qty(1-\omega_k^{-r})\qty(1-\omega_k^r z)^{-n}}
        }
    \end{equation*}
    
    Where:
    \begin{align*}
        \omega_k = e^{\tfrac{2\pi i}{k}}
        &&
        \psi_{k,r} = \begin{cases}
            1 & r \equiv 0 \pmod k \\
            -1 & r \equiv 1 \pmod k \\
            0 & \text{otherwise}
        \end{cases}
        &&
        \parbox{12em}{\centering$\Bs(n,k,r)$ is the coefficient of $x^r$ in the expansion of $\qty(1+x+x^2+\cdots+x^{k-1})^n$}
    \end{align*}
\end{tcolorbox}

\newpage

\section{Preliminary Definitions}

In this paper:
\begin{itemize}
    \item $\N$ denotes the set $\{1,2,3,\ldots\}$, while $\N_0$ denotes the set $\{0,1,2,3,\ldots\}$.
    \item $\binom{n}{k}$ denotes the binomial coefficient $\frac{n!}{(n-k)!k!}$, which is defined for all $n\in\N_0$ and all $k\in\Z$. If $k<0$ or $k>n$, we have $\binom{n}{k}=0$.
    \item $\omega_k$ denotes $e^{\tfrac{2\pi i}{k}}$, the $k$\textsuperscript{th} root of unity with the least positive argument.
\end{itemize}
Also, for $k\ge 2$ and $r\in\Z$:
\begin{itemize}
    \item $\psi_{k,r}$ denotes $1$ if $r\equiv 0\pmod{k}$, $1$ if $r\equiv 0\pmod{k}$, and $0$ otherwise. \hfill (\cite[Definition 1.1]{sekhon})
    \item $\Bs(n,k,r)$ denotes the coefficient of $x^r$ in the expansion of $\qty(1+x+x^2+\cdots+x^{k-1})^n$. \hfill (\cite[Definition 4.2]{sekhon})
\end{itemize}

\vspace{5pt}

The following identity appears as Corollary 4.4 in \cite{sekhon}. It does not matter if we include the term with $s=0$ (or $s=k$) as this is zero.

\begin{corollary}\label{psi_k_r:identity_2}
    \begin{equation*}
        \sum_{r=0}^{(k-1)n} \psi_{k,r}\Bs(n,k,r)z^r
        = \frac{\qty(1-z^k)^n}{k}\sum_{s=1}^{k-1} \qty(1-\omega_k^{-s})\qty(1-\omega_k^s z)^{-n}
    \end{equation*}
\end{corollary}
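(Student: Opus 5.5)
Since the left-hand side is a polynomial in $z$, I will compare both sides as polynomials, using a roots-of-unity filter to extract the residue-class structure of $\psi_{k,r}$. Write $P(x)=(1+x+\cdots+x^{k-1})^n=\sum_r \Bs(n,k,r)x^r$. Because $\psi_{k,r}=[r\equiv 0]-[r\equiv 1] \pmod k$, the filter
\begin{equation*}
    [r\equiv j \pmod k] = \frac1k\sum_{s=0}^{k-1}\omega_k^{s(r-j)}
\end{equation*}
gives
\begin{equation*}
    \psi_{k,r} = \frac1k\sum_{s=0}^{k-1}\omega_k^{sr}\qty(1-\omega_k^{-s}).
\end{equation*}
Substituting this into the left-hand side and exchanging the finite sums, I obtain
\begin{equation*}
    \sum_{r} \psi_{k,r}\Bs(n,k,r)z^r=\frac1k\sum_{s=0}^{k-1}\qty(1-\omega_k^{-s})P\qty(\omega_k^s z).
\end{equation*}
The $s=0$ term vanishes because of the factor $1-\omega_k^{0}=0$; this is exactly the remark preceding the statement.

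Next I evaluate $P$ at $\omega_k^s z$ for $1\le s\le k-1$. Using the geometric sum $1+x+\cdots+x^{k-1}=(1-x^k)/(1-x)$ with $x=\omega_k^s z$, and $(\omega_k^s z)^k=z^k$, I get
\begin{equation*}
    P\qty(\omega_k^s z)=\qty(1-z^k)^n\qty(1-\omega_k^s z)^{-n}.
\end{equation*}
Pulling out the common factor $\qty(1-z^k)^n$ then yields the right-hand side of \Cref{psi_k_r:identity_2} directly.

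The main obstacle is only a technical point about where the geometric-sum formula is valid. The formula holds as an identity of rational functions wherever $\omega_k^s z\ne 1$. Both sides of the final identity are rational functions that agree on an open set, so they agree identically. Alternatively, the whole computation can be read in the ring of formal power series in $z$, where $1-\omega_k^s z$ is invertible. In either reading the right-hand side is in fact a polynomial, since $(1-z^k)^n/(1-\omega_k^s z)^n$ equals $P(\omega_k^s z)$.
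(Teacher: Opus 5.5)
Your proof is correct and follows essentially the intended route. This paper does not reprove the identity but cites it as Corollary 4.4 of \cite{sekhon}; that result rests on the same roots-of-unity expression $k\psi_{k,t}=\sum_{s=1}^{k-1}\omega_k^{st}\qty(1-\omega_k^{-s})$ (invoked here as \cite[Lemma 1.2]{sekhon} in the proof of \Cref{mu_k_asymptotic_lemma}), applied to $\qty(1+x+\cdots+x^{k-1})^n$ at $x=\omega_k^s z$ and followed by the geometric-sum simplification, exactly as you do, including your handling of the vanishing $s=0$ term and the validity of $(1-x^k)/(1-x)$.
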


\section{A lemma from the Goulden-Jackson method}

In this section, we use the notation of \cite[Definition 2.2]{sekhon}. See \cite[\S2]{sekhon} for a description of the Goulden-Jackson method.

\begin{lemma}\label{goulden_jackson:transformation_lemma}
    Suppose $\vb{a}$ and $\vb{b}$ are two forbidden words of the same length. Define $\mathfrak{A}$ as the set of all forbidden words that front-run $\vb{a}$, and $\mathfrak{B}$ as the set of all forbidden words that front-run $\vb{b}$. Suppose there is a bijection $\phi:\mathfrak{A}\to\mathfrak{B}$ such that for all $\vb{x}\in\mathfrak{A}$, we have $\abs{\vb{b}\wedge\vb{\phi(\vb{x})}}=\abs{\vb{a}\wedge\vb{x}}$ and $W(\phi(\vb{x}))=W(\vb{x})$. Then $W(\vb{a})=W(\vb{b})$.
\end{lemma}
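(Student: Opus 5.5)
In the Goulden--Jackson method of \cite[\S2]{sekhon}, the weight $W(\vb{a})$ of a forbidden word $\vb{a}$ is determined by a linear recursion. If I recall \cite[Definition 2.2]{sekhon} correctly, the recursion has the form
\begin{equation*}
W(\vb{a}) = -z^{\abs{\vb{a}}} - \sum_{\vb{x}\in\mathfrak{A}} z^{\abs{\vb{a}}-\abs{\vb{a}\wedge\vb{x}}}\,W(\vb{x}),
\end{equation*}
where the sum runs over the forbidden words $\vb{x}$ that front-run $\vb{a}$, and $\vb{a}\wedge\vb{x}$ is the overlap (a suffix of $\vb{x}$ equal to a prefix of $\vb{a}$). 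The exact sign conventions and the way the overlap enters the exponent should be taken from \cite[Definition 2.2]{sekhon}. The only structural features I need are these: the right-hand side depends on $\vb{a}$ only through $\abs{\vb{a}}$, the index set $\mathfrak{A}$, and the overlap lengths $\abs{\vb{a}\wedge\vb{x}}$; and it depends on each $\vb{x}$ only through $W(\vb{x})$.

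The plan is to write the recursion for $W(\vb{a})$ and for $W(\vb{b})$ side by side and compare them term by term.

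1. The leading terms $-z^{\abs{\vb{a}}}$ and $-z^{\abs{\vb{b}}}$ agree, because $\vb{a}$ and $\vb{b}$ have the same length.

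2. In the sum for $W(\vb{b})$, I reindex by substituting $\vb{y}=\phi(\vb{x})$. Since $\phi:\mathfrak{A}\to\mathfrak{B}$ is a bijection, this gives
\begin{equation*}
\sum_{\vb{y}\in\mathfrak{B}} z^{\abs{\vb{b}}-\abs{\vb{b}\wedge\vb{y}}}W(\vb{y}) = \sum_{\vb{x}\in\mathfrak{A}} z^{\abs{\vb{b}}-\abs{\vb{b}\wedge\phi(\vb{x})}}W(\phi(\vb{x})).
\end{equation*}

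3. By hypothesis, $\abs{\vb{b}\wedge\phi(\vb{x})}=\abs{\vb{a}\wedge\vb{x}}$ and $W(\phi(\vb{x}))=W(\vb{x})$, and again $\abs{\vb{b}}=\abs{\vb{a}}$. So each summand becomes exactly the corresponding summand in the recursion for $W(\vb{a})$.

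Adding the pieces gives $W(\vb{b})=W(\vb{a})$.

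The one point needing care is multiple overlaps. If the definition allows the same pair $(\vb{x},\vb{a})$ to overlap in several ways, so that the sum runs over pairs (word, overlap) rather than over words, then $\abs{\vb{a}\wedge\vb{x}}$ is not a single number. In that case I would read $\mathfrak{A}$ as the set of such front-running pairs and $\phi$ as a bijection of pairs that preserves the overlap length; the same reindexing argument then goes through unchanged. Apart from matching the statement to the precise form of \cite[Definition 2.2]{sekhon}, there is no real obstacle: no fixed-point or uniqueness argument is needed, since the equality follows directly from the defining equations once the hypothesis $W(\phi(\vb{x}))=W(\vb{x})$ is granted.
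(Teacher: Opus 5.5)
Your proposal is correct and is essentially the paper's proof: write the defining recursion for $W(\vb{a})$ and $W(\vb{b})$, match the leading terms using $\abs{\vb{a}}=\abs{\vb{b}}$, and reindex the sum over $\mathfrak{B}$ through $\phi$ so that the overlap and weight hypotheses make the summands coincide. Your caveat about reading $\mathfrak{A}$ as a set of (word, overlap) pairs when a word front-runs $\vb{a}$ in several ways is a sensible refinement that the paper leaves implicit. It is in fact needed when the lemma is applied to soft streaks with repeated letters, such as $(1,1,1,1)$.
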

In other words, if we have two forbidden words $\vb{a}$ and $\vb{b}$ of the same length, and every forbidden word that front-runs $\vb{a}$ can be transformed into a forbidden word that front-runs $\vb{b}$ with the \emph{same overlap} and the \emph{same weight} (and vice versa), then $W(\vb{a})=W(\vb{b})$.
\begin{proof}
    By the definition of $W(\vb{x})$, we have:
    \begin{align*}
        W(\vb{a})
        = -z^\abs{\vb{a}} - \sum_{\vb{c}\in\mathfrak{A}} z^{\abs{\vb{a}}-\abs{\vb{a}\wedge\vb{c}}}W(\vb{c})
        &&
        W(\vb{b})
        = -z^\abs{\vb{b}} - \sum_{\vb{d}\in\mathfrak{B}} z^{\abs{\vb{b}}-\abs{\vb{b}\wedge\vb{d}}}W(\vb{d})
    \end{align*}
    By assumption, we have $\abs{\vb{a}}=\abs{\vb{b}}$, and for all $\vb{c}\in\mathfrak{A}$ (setting $\vb{d}=\phi(\vb{c})$), we have $\abs{\vb{a}\wedge\vb{c}}=\abs{\vb{b}\wedge\vb{d}}$ and $W(\vb{c})=W(\vb{d})$. Thus $W(\vb{a})=W(\vb{b})$.
\end{proof}
\begin{remark}
    In practice, when we use this lemma, the situation will be symmetric in $\vb{a}$ and $\vb{b}$, so we only need to show that every forbidden word that front-runs $\vb{a}$ can be transformed into a forbidden word that front-runs $\vb{b}$ with the same overlap and the same weight. The reverse direction will follow by switching $\vb{a}$ and $\vb{b}$.
\end{remark}

\begin{corollary}\label{goulden_jackson:last_letter_corollary}
    Suppose $\Fs$ is a set of forbidden words, all of the same length. Then $W(\vb{a})$ does not depend on the last letter of $\vb{a}$.
\end{corollary}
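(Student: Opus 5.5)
We will derive \Cref{goulden_jackson:last_letter_corollary} from \Cref{goulden_jackson:transformation_lemma}. The plan is a strong induction on a well-founded order on forbidden words, driven by an explicit bijection between the front-running sets. Fix two forbidden words $\vb{a}$ and $\vb{b}$, both of the common length $m$, that agree in every position except possibly the last. We must show $W(\vb{a})=W(\vb{b})$.

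\textbf{The key observation.} Whether a forbidden word $\vb{x}$ front-runs $\vb{a}$, and the size of the overlap $\abs{\vb{a}\wedge\vb{x}}$, depend only on how a suffix of $\vb{x}$ matches a prefix of $\vb{a}$. Since all forbidden words have length $m$, a proper overlap has length $\ell\le m-1$. Such an overlap compares the last $\ell$ letters of $\vb{x}$ with the first $\ell$ letters of $\vb{a}$, so it never involves the last letter of $\vb{a}$. Hence $\mathfrak{A}=\mathfrak{B}$ as sets, and the overlaps of each $\vb{x}$ with $\vb{a}$ and with $\vb{b}$ coincide. We therefore take $\phi$ to be the identity.

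\textbf{Checking the hypotheses.} With $\phi=\mathrm{id}$, the weight condition $W(\phi(\vb{x}))=W(\vb{x})$ holds trivially. One point needs care: in the Goulden--Jackson recursion a word may front-run $\vb{a}$ with several different overlaps, so the sum runs over pairs (word, overlap). The identity map still matches these pairs bijectively with equal overlap lengths. The proof of \Cref{goulden_jackson:transformation_lemma} then goes through verbatim, because the two defining sums for $W(\vb{a})$ and $W(\vb{b})$ are term-by-term identical. This gives $W(\vb{a})=W(\vb{b})$.

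\textbf{Main obstacle.} The only subtlety is confirming that the definition in \cite[Definition 2.2]{sekhon} excludes the full overlap $\ell=m$, that is, $\vb{x}$ coinciding with $\vb{a}$. If it did not, a full overlap would compare the last letters and could break the equality $\mathfrak{A}=\mathfrak{B}$. Front-running requires a proper overlap, so this case should not arise; this is the single point I would check against that definition. No induction on $W$ is then needed, since the right-hand sides coincide outright.
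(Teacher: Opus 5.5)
Your argument is correct and is essentially the paper's proof. Every proper overlap has length $j\le k-1$, so it only compares against $a_1,\ldots,a_{k-1}=b_1,\ldots,b_{k-1}$; this gives $\mathfrak{A}=\mathfrak{B}$ with identical overlaps, and $\phi=\mathrm{id}$ satisfies the transformation lemma. The induction you announce at the start is indeed unnecessary, as you conclude yourself, and counting (word, overlap) pairs is a sound clarification of what the paper does implicitly by fixing the overlap $j$.
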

\begin{proof}
    Suppose all forbidden words have the same length $k$. Suppose $\vb{a}=(a_1,a_2,\ldots,a_k)$ and $\vb{b}=(b_1,b_2,\ldots,b_k)$ are two forbidden words such that $a_i=b_i$ for all $i\in\{1,2,\ldots,k-1\}$ (in other words, $\vb{a}$ and $\vb{b}$ may differ only in their last letter). We want to show that $W(\vb{a})=W(\vb{b})$. Suppose $\vb{c}=(c_1,c_2,\ldots,c_k)$ is a forbidden word that front-runs $\vb{a}$ with an overlap of $j$, where $j\in\{1,2,\ldots,k-1\}$. Then we have $c_{k-j+i}=a_i=b_i$ for all $i\in\{1,2,\ldots,j\}$. Thus $\vb{c}$ also front-runs $\vb{b}$ with an overlap of $j$. Using the notation in \Cref{goulden_jackson:transformation_lemma}, we have $\mathfrak{A}=\mathfrak{B}$ and we can define $\phi(\vb{x})=\vb{x}$. By \Cref{goulden_jackson:transformation_lemma}, we have $W(\vb{a})=W(\vb{b})$.
\end{proof}

\section{Avoiding soft streaks of length $k$}

In this section, we will focus on applying the Goulden-Jackson method to the case where $\Fs$ is the set of all non-decreasing words of length $k$. For example, if $n=4$ and $k=3$, we have:
\begin{align*}
    \Fs = \{&(1,1,1),(1,1,2),(1,1,3),(1,1,4),(1,2,2),(1,2,3),(1,2,4),(1,3,3),(1,3,4),(1,4,4), \\
    &(2,2,2),(2,2,3),(2,2,4),(2,3,3),(2,3,4),(2,4,4),(3,3,3),(3,3,4),(3,4,4),(4,4,4)\}
\end{align*}
In general, we have $\abs{\Fs}=\binom{n+k-1}{k}$.

\begin{definition}
    A \textbf{soft streak} of length $k$ (or more simply a \emph{soft streak of $k$}) is any non-decreasing word of length $k$.
\end{definition}

Similarly to the previous section, we start by proving a lemma akin to \cite[Lemma 3.2]{sekhon}. This time, we no longer have the benefit of performing induction on the first letter (which we could do for \emph{strictly} increasing words), so a different approach is required. We instead perform induction on the \emph{number} of letters of $\vb{a}$ that $W(\vb{a})$ is independent of.

\begin{lemma}\label{soft_streak:w_depends_only_on_1}
    $W(a_1,a_2,\ldots,a_k)$ depends only on $a_1$ and not on $a_2,\ldots,a_k$.
\end{lemma}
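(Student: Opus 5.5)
We argue by downward induction on $m$, proving for $m=k,k-1,\ldots,1$ the statement
\[
P(m):\ W(a_1,\ldots,a_k)\text{ depends only on } a_1,\ldots,a_m.
\]
The base case $P(k)$ is trivial. In fact \Cref{goulden_jackson:last_letter_corollary} already gives $P(k-1)$, since all words of $\Fs$ have length $k$. The target is $P(1)$.

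For the inductive step, assume $P(m+1)$ with $m\ge 1$. Let $\vb{a},\vb{b}\in\Fs$ agree in positions $1,\ldots,m$. By $P(m+1)$ we may change positions $m+2,\ldots,k$ freely, provided we stay non-decreasing, without changing $W$. So we replace $\vb{a}$ by $(a_1,\ldots,a_m,a_{m+1},n,\ldots,n)$ and $\vb{b}$ by $(a_1,\ldots,a_m,b_{m+1},n,\ldots,n)$. It then suffices to treat two words differing only in position $m+1$, with all later letters equal to $n$.

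We then apply \Cref{goulden_jackson:transformation_lemma} to these two words. A forbidden word $\vb{c}$ front-runs $\vb{a}$ with overlap $j$ exactly when its last $j$ letters equal $a_1,\ldots,a_j$. The construction of $\phi$ splits by overlap.
\begin{itemize}
\item If $j\le m$, the overlap involves only the common prefix, so $\vb{c}$ also front-runs $\vb{b}$ with the same overlap. We set $\phi(\vb{c})=\vb{c}$.
\item If $j\ge m+1$, then $\vb{c}$ ends in $a_1,\ldots,a_m,a_{m+1},n,\ldots,n$ (the first $j$ letters of $\vb{a}$). We define $\phi(\vb{c})$ by changing the letter of $\vb{c}$ in position $k-j+m+1$ from $a_{m+1}$ to $b_{m+1}$. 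The result is still non-decreasing: its left neighbour is $a_m\le b_{m+1}$ (or a letter of $\vb{c}$ before the overlap, at most $a_1\le a_m$ when $m=0$, which does not occur here), and its right neighbour is $n$ or nothing. The result front-runs $\vb{b}$ with overlap $j$, and $\phi$ is clearly a bijection.
\end{itemize}

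It remains to check $W(\phi(\vb{c}))=W(\vb{c})$. The changed position is $k-j+m+1$. Since $j\le k-1$, this is at least $m+2$. Hence $\vb{c}$ and $\phi(\vb{c})$ agree in positions $1,\ldots,m+1$, and $P(m+1)$ gives equal weights. Symmetry in $\vb{a},\vb{b}$, as in the Remark, handles the reverse direction, and \Cref{goulden_jackson:transformation_lemma} yields $W(\vb{a})=W(\vb{b})$, which is $P(m)$.

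The main obstacle is the case $j\ge m+1$: keeping the modified $\vb{c}$ non-decreasing and making sure the altered position lies strictly beyond $m+1$. That is exactly why we first normalise the tails to $n$ and why the induction runs downward. We should also double-check the edge cases $j=m+1$ and $j=k-1$. For $j=m+1$ the altered letter is the last letter of $\vb{c}$; for $j=k-1$ the altered position is $m+2\le k$, which is fine provided $m\le k-2$. The case $m=k-1$ is already covered by \Cref{goulden_jackson:last_letter_corollary}.
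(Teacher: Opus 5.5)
Your proof is correct and follows essentially the paper's route. Both arguments run a downward induction on how many leading letters $W$ can depend on. Both apply \Cref{goulden_jackson:transformation_lemma} with a bijection that rewrites the overlapping part of each front-runner $\vb{c}$, and in both the weight is preserved by the induction hypothesis because the altered positions lie beyond position $m+1$.

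The only difference is cosmetic. The paper replaces the whole overlap $a_1,\ldots,a_j$ by $b_1,\ldots,b_j$, which stays non-decreasing simply because $c_{k-j}\le a_1=b_1$; this makes your preliminary normalisation of the tails to $n$ unnecessary. On the other hand, you spell out the weight-preservation step, which the paper leaves implicit.
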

\begin{proof}
    Since all of the forbidden words have length $k$, it follows from \Cref{goulden_jackson:last_letter_corollary} that $W(a_1,a_2,\ldots,a_k)$ does not depend on $a_k$. Suppose $W(a_1,a_2,\ldots,a_k)$ does not depend on any of $a_{r+1},a_{r+2},\ldots,a_k$, where $r\in\{2,3,\ldots,k-1\}$. In other words, $W(\vb{a})$ does not depend on the last $k-r$ letters of $\vb{a}$. We will show that it also does not depend on $a_r$.\\
    
    Suppose $\vb{a}=(a_1,a_2,\ldots,a_k)$ and $\vb{b}=(b_1,b_2,\ldots,b_k)$ are two soft streaks of length $k$ such that $a_i=b_i$ for all $i\in\{1,2,\ldots,r-1\}$. We want to show that $W(\vb{a})=W(\vb{b})$. Suppose $\vb{c}=(c_1,c_2,\ldots,c_k)$ is a soft streak of length $k$ that front-runs $\vb{a}$ with an overlap of $j$, where $1\le j\le k-1$. Then we have $c_{k-j+i}=a_i$ for all $i\in\{1,2,\ldots,j\}$. Now define:
    \begin{equation*}
        \vb{d} = (c_1,c_2,\ldots,c_{k-j},b_1,b_2,\ldots,b_j)
    \end{equation*}
    In other words, $\vb{d}$ is a copy of $\vb{c}$ where the part overlapping with $\vb{a}$ is replaced with the corresponding elements of $\vb{b}$. Thus $\vb{d}$ front-runs $\vb{b}$ with an overlap of $j$. Since $\vb{c}$ is non-decreasing, we have $c_1\le c_2\le\cdots\le c_{k-j+1}$. Since $\vb{b}$ is non-decreasing, we have $b_1\le b_2\le\cdots\le b_j$. We also have $c_{k-j+1}=a_1=b_1$. This yields:
    \begin{equation*}
        c_1\le c_2\le\cdots\le c_{k-j}\le b_1\le b_2\le\cdots\le b_j
    \end{equation*}
    Thus $\vb{d}$ is non-decreasing, and so it is a soft streak. Switching $\vb{a}$ and $\vb{b}$ in the above argument allows us to recover $\vb{c}$ from $\vb{d}$, with the same overlap and the same weight. Therefore, by \Cref{goulden_jackson:transformation_lemma}, we have $W(\vb{a})=W(\vb{b})$. Thus $W(a_1,a_2,\ldots,a_k)$ does not depend on $a_r$, and so it does not depend on any of $a_2,\ldots,a_k$.
\end{proof}
\begin{remarks}\leavevmode
    \begin{enumerate}
        \item It may seem as if the above argument holds for any set $\Fs$ of forbidden words that all have length $k$. However, we used the fact that $\Fs$ contains \emph{all} the soft streaks to conclude that $\vb{d}$ is indeed a forbidden word. We also need the fact that $\Fs$ contains \emph{only} the soft streaks to be able to recover $\vb{c}$ from $\vb{d}$.
        \item The induction step fails when $r=1$ as we would no longer have $c_{k-j+1}=a_1=b_1$, which we needed to conclude that $\vb{d}$ is a soft streak. And as we will see shortly, $W(a_1,a_2,\ldots,a_k)$ \emph{does} depend on $a_1$.
    \end{enumerate}
\end{remarks}

\begin{example}
    Suppose $k=4$ (and $n$ remains arbitrary). We can write out the distinct weights as follows:
    \begin{align*}
        W(1,2,3,4)
        &= -z^4 - z^3W(1,1,1,1) - z^2W(1,1,1,2) - zW(1,1,2,3) \\
        W(2,3,4,5)
        &= -z^4 - z^3W(1,1,1,2) - z^3W(1,1,2,2) - z^3W(1,2,2,2) - z^3W(2,2,2,2) \\
        &\qquad\quad - z^2W(1,1,2,3) - z^2W(1,2,2,3) - z^2W(2,2,2,3) \\
        &\qquad\quad - zW(1,2,3,4) - zW(2,2,3,4) \\
        W(3,4,5,6)
        &= -z^4 - z^3W(1,1,1,3) - z^3W(1,1,2,3) - z^3W(1,1,3,3) - z^3W(1,2,2,3) - z^3W(1,2,3,3) \\
        &\qquad\quad - z^3W(1,3,3,3) - z^3W(2,2,2,3) - z^3W(2,2,3,3) - z^3W(2,3,3,3) - z^3W(3,3,3,3) \\
        &\qquad\quad - z^2W(1,1,3,4) - z^2W(1,2,3,4) - z^2W(1,3,3,4) \\
        &\qquad\quad - z^2W(2,2,3,4) - z^2W(2,3,3,4) - z^2W(3,3,3,4) \\
        &\qquad\quad - zW(1,3,4,5) - zW(2,3,4,5) - zW(3,3,4,5) \\
        W(4,5,6,7)
        &= -z^4 - z^3W(1,1,1,4) - z^3W(1,1,2,4) - z^3W(1,1,3,4) - z^3W(1,1,4,4) - z^3W(1,2,2,4) \\
        &\qquad\quad - z^3W(1,2,3,4) - z^3W(1,2,4,4) - z^3W(1,3,3,4) - z^3W(1,3,4,4) - z^3W(1,4,4,4) \\
        &\qquad\quad - z^3W(2,2,2,4) - z^3W(2,2,3,4) - z^3W(2,2,4,4) - z^3W(2,3,3,4) - z^3W(2,3,4,4) \\
        &\qquad\quad - z^3W(2,4,4,4) - z^3W(3,3,3,4) - z^3W(3,3,4,4) - z^3W(3,4,4,4) - z^3W(4,4,4,4) \\
        &\qquad\quad - z^2W(1,1,4,5) - z^2W(1,2,4,5) - z^2W(1,3,4,5) - z^2W(1,4,4,5) - z^2W(2,2,4,5) \\
        &\qquad\quad - z^2W(2,3,4,5) - z^2W(2,4,4,5) - z^W(3,3,4,5) - z^2W(3,4,4,5) - z^2W(4,4,4,5) \\
        &\qquad\quad - zW(1,4,5,6) - zW(2,4,5,6) - zW(3,4,5,6) - zW(4,4,5,6)
    \end{align*}
    Using \Cref{soft_streak:w_depends_only_on_1}, we can simplify this to:
    \begin{align*}
        w_1
        &= -z^4 - z^3w_1 - z^2w_1 - zw_1 \\
        &= -z^4 - (z^3+z^2+z)w_1 \\
        w_2
        &= -z^4 - z^3w_1 - z^3w_1 - z^3w_1 - z^3w_2 - z^2w_1 - z^2w_1 - z^2w_2 - zw_1 - zw_2 \\
        &= -z^4 - (3z^3+2z^2+z)w_1 - (z^3+z^2+z)w_2 \\
        w_3
        &= -z^4 - z^3w_1 - z^3w_1 - z^3w_1 - z^3w_1 - z^3w_1 - z^3w_1 - z^3w_2 - z^3w_2 - z^3w_2 - z^3w_3 \\
        &\qquad\quad - z^2w_1 - z^2w_1 - z^2w_1 - z^2w_2 - z^2w_2 - z^2w_3 - zw_1 - zw_2 - zw_3 \\
        &= -z^4 - (6z^3+3z^2+z)w_1 - (3z^3+2z^2+z)w_2 - (z^3+z^2+z)w_3 \\
        w_4
        &= -z^4 - z^3w_1 - z^3w_1 - z^3w_1 - z^3w_1 - z^3w_1 - z^3w_1 - z^3w_1 - z^3w_1 - z^3w_1 - z^3w_1 \\
        &\qquad\quad - z^3w_2 - z^3w_2 - z^3w_2 - z^3w_2 - z^3w_2 - z^3w_2 - z^3w_3 - z^3w_3 - z^3w_3 - z^3w_4 \\
        &\qquad\quad - z^2w_1 - z^2w_1 - z^2w_1 - z^2w_1 - z^2w_2 \\
        &\qquad\quad - z^2w_2 - z^2w_2 - z^2w_3 - z^2w_3 - z^2w_4 \\
        &\qquad\quad - zw_1 - zw_2 - zw_3 - zw_4 \\
        &= -z^4 - (10z^3+4z^2+z)w_1 - (6z^3+3z^2+z)w_2 - (3z^3+2z^2+z)w_3 - (z^3+z^2+z)w_4
    \end{align*}
    More generally, for all $m\in\N_0$:
    \begin{talign*}
        w_m
        &= -z^4 - \qty(\binom{m+1}{2}z^3+mz^2+z)w_1 - \qty(\binom{m}{2}z^3+(m-1)z^2+z)w_2 - \cdots \\
        &\qquad\quad - \qty(6z^3+3z^2+z)w_{m-2} - \qty(3z^3+2z^2+z)w_{m-1} - (z^3+z^2+z)w_m
    \end{talign*}
    Similarly to the previous section, these expressions do \emph{not} depend on $n$. However, this time, $w_m$ appears on the right side of the equation for $w_m$, and so it is not an explicit equation. Of course, we can easily make it explicit, but we will defer this step to simplify some of our intermediate work.
\end{example}

\begin{proposition}\label{soft_streak:weights_recursive_formula}
    For all $m\in\N$, we have:
    \begin{equation*}
        w_m = -z^k - \sum_{r=1}^m \sum_{s=1}^{k-1} \binom{m-r+s-1}{s-1} z^s w_r
    \end{equation*}
\end{proposition}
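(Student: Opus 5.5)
We start from the Goulden--Jackson recursion already used in the proof of \Cref{goulden_jackson:transformation_lemma}: for a forbidden word $\vb{a}$ of length $k$,
\begin{equation*}
    W(\vb{a}) = -z^k - \sum_{\vb{c}} z^{k-\abs{\vb{a}\wedge\vb{c}}}W(\vb{c}),
\end{equation*}
where $\vb{c}$ runs over all soft streaks of length $k$ that front-run $\vb{a}$, counted with multiplicity over the possible overlaps $j=\abs{\vb{a}\wedge\vb{c}}\in\{1,\ldots,k-1\}$. By \Cref{soft_streak:w_depends_only_on_1}, $W(\vb{c})=w_{c_1}$, so we may choose $\vb{a}=(m,m+1,\ldots)$ or any soft streak with first letter $m$. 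It then suffices to count front-runners $\vb{c}$ grouped by their first letter $r=c_1$ and by the overlap $j$. We set $s=k-j\in\{1,\ldots,k-1\}$; this is the exponent of $z$.

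For fixed $j$, a soft streak $\vb{c}$ front-runs $\vb{a}$ with overlap $j$ exactly when $(c_{s+1},\ldots,c_k)=(a_1,\ldots,a_j)$ and $c_1\le\cdots\le c_s\le a_1=m$. Monotonicity of the tail holds automatically since $\vb{a}$ is a soft streak, and the junction condition only involves $a_1=m$. So the only freedom is a non-decreasing sequence $c_1\le c_2\le\cdots\le c_s\le m$. Here we should pick $\vb{a}$ with entries in range, e.g.\ $\vb{a}=(m,m,\ldots,m)$. This avoids any issue with letters exceeding $n$, and the weight is unchanged by \Cref{soft_streak:w_depends_only_on_1}. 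This also explains why the formula does not depend on $n$.

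Next we fix $c_1=r$ with $1\le r\le m$. The number of non-decreasing sequences $c_2\le\cdots\le c_s$ taking values in $\{r,\ldots,m\}$ is the number of multisets of size $s-1$ from a set of $m-r+1$ values, namely $\binom{(m-r+1)+(s-1)-1}{s-1}=\binom{m-r+s-1}{s-1}$. For $s=1$ this gives $1$, consistent with the single front-runner ending at $r=m$. Summing $z^s w_r$ over these choices and over $1\le r\le m$, $1\le s\le k-1$ yields exactly the claimed formula.

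As a sanity check, we compare with the $k=4$ example: the coefficient of $w_1$ in $w_m$ should be $\binom{m+1}{2}z^3+mz^2+z$, which matches $\binom{m-1+s-1}{s-1}$ for $s=3,2,1$. The main point requiring care is the bijective counting step: each pair (overlap $j$, word $\vb{c}$) must be counted once, and no extra constraints may arise. The latter holds because the overlap region is forced to equal $\vb{a}$, so only the prefix is free. Everything else is routine stars-and-bars.
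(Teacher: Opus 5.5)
Your proposal is correct and follows essentially the same route as the paper: expand $w_m$ via the Goulden--Jackson recursion, replace each $W(\vb{c})$ by $w_{c_1}$ using \Cref{soft_streak:w_depends_only_on_1}, and use stars and bars to count the front-runners with overlap $k-s$ and first letter $r$, which gives $\binom{m-r+s-1}{s-1}$. The only difference is that you take the representative $\vb{a}=(m,m,\ldots,m)$ instead of the paper's $(m,m+1,\ldots,m+k-1)$; this keeps all letters in $\{1,\ldots,n\}$, but a single $\vb{c}$ can then front-run $\vb{a}$ with several overlaps, and you correctly count these with multiplicity as the cluster recursion requires.
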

\begin{proof}
    We start by expanding $w_m$ as follows:
    \begin{align*}
        w_m
        &= W(m,m+1,\ldots,m+k-1) \\
        &= -z^k \\
        &\quad -z^{k-1}\qty(W(1,1,\ldots,1,\frontone{m}) + \cdots + W(m,m,\ldots,m,\frontone{m})) \tag{$\binom{m+k-2}{k-1}$ terms} \\
        &\quad -z^{k-2}\qty(W(1,1,\ldots,1,\fronttwo{m,m+1}) + \cdots + W(m,m,\ldots,m,\fronttwo{m,m+1})) \tag{$\binom{m+k-3}{k-2}$ terms} \\
        &\quad\;\: \vdots \\
        &\quad -z^2\qty(W(1,1,\frontend{m,m+1,\ldots,m+k-3}) + \cdots + W(m,m,\frontend{m,m+1,\ldots,m+k-3})) \tag{$\binom{m+1}{2}$ terms} \\
        &\quad -z\qty(W(1,\frontend{m,m+1,\ldots,m+k-2}) + \cdots + W(m,\frontend{m,m+1,\ldots,m+k-2})) \tag{$m$ terms}
    \end{align*}
    More specifically, the terms with $z^{k-1}$ are the weights of all soft streaks of length $k$ that \frontone{end with $m$}. The terms with $z^{k-2}$ are the weights of all soft streaks of length $k$ that \fronttwo{end with $(m,m+1)$}. Likewise for the \frontend{other terms}.\\
    
    We now simplify the weights on the right using \Cref{soft_streak:w_depends_only_on_1}. To do this, note that among the $\binom{m+k-2}{k-1}$ terms above with $z^{k-1}$, there are $\binom{m+k-3}{k-2}$ terms that start with $1$ (because they are of the form $(1,\xleftrightarrow{k-2},m)$, and the $k-2$ numbers in between are chosen with replacement from $\{1,2,\ldots,m\}$), $\binom{m+k-4}{k-3}$ terms that start with $1$, and so on. Likewise for the terms with other powers of $z$. All in all, we have:
    \begin{align*}
        w_m
        &= -z^k \\
        &\quad - z^{k-1}\qty(\binom{m+k-3}{k-2}w_1 + \binom{m+k-4}{k-2}w_2 + \cdots + \binom{k-1}{k-2}w_{m-1} + \binom{k-2}{k-2}w_m) \\
        &\quad - z^{k-2}\qty(\binom{m+k-4}{k-3}w_1 + \binom{m+k-5}{k-3}w_2 + \cdots + \binom{k-2}{k-3}w_{m-1} + \binom{k-3}{k-3}w_m) \\
        &\quad\;\: \vdots \\
        &\quad - z^2\qty(mw_1 + (m-1)w_2 + \cdots + 2w_{m-1} + w_m) \\
        &\quad - z\qty(w_1 + w_2 + \cdots + w_{m-1} + w_m)
    \end{align*}
    We now regroup the terms to separate $w_1,w_2,\ldots,w_m$:
    \begin{align*}
        w_m
        &= -z^k \\
        &\quad - \qty(\binom{m+k-3}{k-2}z^{k-1} + \binom{m+k-4}{k-3}z^{k-2} + \cdots + mz^2 + z)w_1 \\
        &\quad - \qty(\binom{m+k-4}{k-2}z^{k-1} + \binom{m+k-5}{k-3}z^{k-2} + \cdots + (m-1)z^2 + z)w_2 \\
        &\quad\;\: \vdots \\
        &\quad - \qty((k-1)z^{k-1} + (k-2)z^{k-2} + \cdots + 2z^2 + z)w_{m-1} \\
        &\quad - \qty(z^{k-1} + z^{k-2} + \cdots + z^2 + z)w_{m-1} \\
        &= -z^k - \sum_{r=1}^m \sum_{s=1}^{k-1} \binom{m-r+s-1}{s-1} z^s w_r
        \qedhere
    \end{align*}
\end{proof}

\begin{lemma}\label{soft_streak:rational_function_coefficient_lemma}
    Consider the following expression as a rational function of $x$:
    \begin{equation*}
        \sum_{s=1}^{k-1} \frac{\qty(1-\omega_k^{-s})\qty(1-\omega_k^sz)}{1-x-\omega_k^sz}
    \end{equation*}
    Expanded as a power series in $x$, the coefficient of $x^n$ is:
    \begin{equation*}
        \sum_{s=1}^{k-1} \qty(1-\omega_k^{-s})\qty(1-\omega_k^sz)^{-n}
    \end{equation*}
\end{lemma}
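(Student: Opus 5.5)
Since the expression is a finite sum over $s$, I will treat each summand separately and expand it as a geometric series in $x$, then sum the resulting coefficients over $s$. Fix $s\in\{1,\ldots,k-1\}$ and write $u=1-\omega_k^s z$. I am regarding $z$ as a formal parameter, or as a complex number with $\abs{z}<1$ so that $u\neq 0$ for every $s$. Then the summand becomes
\begin{equation*}
    \frac{\qty(1-\omega_k^{-s})\qty(1-\omega_k^sz)}{1-x-\omega_k^sz}
    = \qty(1-\omega_k^{-s})\frac{u}{u-x}
    = \qty(1-\omega_k^{-s})\frac{1}{1-x/u}.
\end{equation*}
The factor $u$ in the numerator cancels exactly against the $u$ that appears when we factor the denominator as $u(1-x/u)$. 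This cancellation is the only nontrivial point of the computation.

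Next, I expand $\frac{1}{1-x/u}=\sum_{n=0}^\infty u^{-n}x^n$ as a geometric series in $x$. This is valid as a formal power series in $x$ because $u$ is invertible: its constant term in $z$ is $1$, and in the analytic setting it suffices to take $\abs{x}<\abs{u}$. Hence the coefficient of $x^n$ in the $s$-th summand is $\qty(1-\omega_k^{-s})\qty(1-\omega_k^sz)^{-n}$.

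Finally, coefficient extraction is linear, so summing over $s=1,\ldots,k-1$ gives exactly the claimed coefficient $\sum_{s=1}^{k-1}\qty(1-\omega_k^{-s})\qty(1-\omega_k^sz)^{-n}$. The case $n=0$ is consistent: the coefficient there is $\sum_{s=1}^{k-1}\qty(1-\omega_k^{-s})=k-1-(-1)=k$, matching the value of the expression at $x=0$.

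There is no real obstacle. The only care needed is to state in which sense the power series lives, formally in $x$ over rational functions of $z$ or analytically near $x=0$. I would state the formal interpretation, since that is how the lemma will presumably be used with \Cref{psi_k_r:identity_2} to identify $f_{n,k}$.
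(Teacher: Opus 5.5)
Your proposal is correct and follows the paper's proof exactly: factor $1-\omega_k^sz$ out of the denominator, cancel it against the numerator, expand the resulting geometric series in $x/(1-\omega_k^sz)$, and sum over $s$. Your remarks on the formal versus analytic interpretation and your $n=0$ sanity check are sound; the paper does not include them.
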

\begin{proof}
    For each $s\in\{1,2,\ldots,k-1\}$, we have:
    \begin{align*}
        \frac{\qty(1-\omega_k^{-s})\qty(1-\omega_k^sz)}{1-x-\omega_k^sz}
        &= \frac{\qty(1-\omega_k^{-s})}{1-\qty(\frac{x}{1-\omega_k^sz})} \\
        &= \qty(1-\omega_k^{-s})\sum_{n=0}^\infty \qty(\frac{x}{1-\omega_k^sz})^n \tag{geometric series in $\frac{x}{1-\omega_k^sz}$} \\
        &= \sum_{n=0}^\infty \qty(1-\omega_k^{-s})\qty(1-\omega_k^sz)^{-n} x^n
    \end{align*}
    The result follows by summing over $s\in\{1,2,\ldots,k-1\}$.
\end{proof}

\begin{proposition}\label{soft_streak:weights_formula_using_generating_function}
    For all $m\in\N$, we have:
    \begin{equation}
        w_m = -\frac{z^k}{k}\sum_{s=1}^{k-1} \qty(1-\omega_k^{-s})\qty(1-\omega_k^sz)^{-m}
    \end{equation}
\end{proposition}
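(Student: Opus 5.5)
The recursion of \Cref{soft_streak:weights_recursive_formula} determines $w_1,w_2,\ldots$ uniquely, since the coefficient of $w_m$ on the right-hand side is $z+z^2+\cdots+z^{k-1}$ and $1+z+\cdots+z^{k-1}$ is invertible as a power series. So the plan is to show that the proposed closed form satisfies the same recursion, and I will do this with generating functions in an auxiliary variable $x$. Define $G(x)=\sum_{m\ge 1} w_m x^m$. Multiplying the recursion by $x^m$ and summing over $m\ge 1$, the double sum becomes a Cauchy-type convolution in $m-r$:
\begin{equation*}
    \sum_{m\ge1}\sum_{r=1}^m \binom{m-r+s-1}{s-1} w_r x^m = G(x)\sum_{t\ge 0}\binom{t+s-1}{s-1}x^t = \frac{G(x)}{(1-x)^s}.
\end{equation*}
Summing over $s$ gives
\begin{equation*}
    G(x)\qty(1+\sum_{s=1}^{k-1}\frac{z^s}{(1-x)^s}) = -\frac{z^k x}{1-x}.
\end{equation*}
With $y=z/(1-x)$ this reads $G(x)\,\frac{1-y^k}{1-y} = -\frac{z^kx}{1-x}$, which gives an explicit rational function $G(x) = -\frac{z^k x}{1-x}\cdot\frac{1-x-z}{1-x}\cdot\frac{(1-x)^k}{(1-x)^k-z^k}$.

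Next I would compute the candidate generating function. By \Cref{soft_streak:rational_function_coefficient_lemma}, the series $\sum_{m\ge0}x^m\sum_s(1-\omega_k^{-s})(1-\omega_k^sz)^{-m}$ equals $\sum_{s=1}^{k-1}\frac{(1-\omega_k^{-s})(1-\omega_k^sz)}{1-x-\omega_k^sz}$. Its $m=0$ term is $\sum_s(1-\omega_k^{-s})=k$, so we must subtract $k$. The claim then reduces to the rational-function identity
\begin{equation*}
    \sum_{s=1}^{k-1}\frac{(1-\omega_k^{-s})(1-\omega_k^sz)}{1-x-\omega_k^sz} - k = \frac{kx(1-x-z)(1-x)^{k-2}}{(1-x)^k-z^k}.
\end{equation*}

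The main obstacle is verifying this identity. I would write $u=1-x$ and use $(1-\omega_k^sz)/(u-\omega_k^sz) = 1+(1-u)/(u-\omega_k^sz)$. The left side then becomes $\sum_s(1-\omega_k^{-s}) - k + x\sum_{s=1}^{k-1}\frac{1-\omega_k^{-s}}{u-\omega_k^sz}$, and the first two terms cancel. The $s=0$ term vanishes, so the sum can be extended over all $k$-th roots of unity. Partial fractions of $1/(u^k-z^k)$ over the roots $u=\omega_k^sz$ then give the standard identities
\begin{equation*}
    \sum_{s=0}^{k-1}\frac{1}{u-\omega_k^sz}=\frac{ku^{k-1}}{u^k-z^k},\qquad \sum_{s=0}^{k-1}\frac{\omega_k^{-s}}{u-\omega_k^sz}=\frac{k z^{-1}\cdot z^{k-1}u^{0}\cdots}{u^k-z^k}.
\end{equation*}
For the second, the residue at $\omega_k^sz$ of $u^{j}/(u^k-z^k)$ suggests taking $j=k-2$, giving $\sum_s \omega_k^{-s}/(u-\omega_k^sz)=kzu^{k-2}/(u^k-z^k)$. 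Subtracting yields $k u^{k-2}(u-z)/(u^k-z^k)$, which after multiplying by $x$ matches the right-hand side.

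Comparing coefficients of $x^m$ for $m\ge1$ then gives the stated formula for $w_m$. The only subtle bookkeeping points are the $m=0$ term and the sign and $z^k/k$ factor, which I would double-check against the case $k=2$.
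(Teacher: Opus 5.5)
Your proof is correct and follows the paper's route: the same generating function in $x$, the same substitution $y=z/(1-x)$ giving the same closed form for $\Phi(x)$, then a partial-fraction identity whose coefficients are read off via \Cref{soft_streak:rational_function_coefficient_lemma}. Your subtraction of the $m=0$ value $k$ plays the role of the paper's constant term $z^k$, and extending the sum to $s=0$ replaces its cancellation of $1-x-z$. The paper only asserts the partial-fraction decomposition, whereas you actually verify it through the two root-of-unity identities. Your first display of the second identity is garbled, but the form you then state, $\sum_{s}\omega_k^{-s}/(u-\omega_k^sz)=kzu^{k-2}/(u^k-z^k)$, is correct for $k\ge 2$.
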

\begin{proof}
    By \Cref{soft_streak:weights_recursive_formula}, we have:
    \begin{align*}
        w_m + \sum_{r=1}^m c_{m-r}w_r = -z^k
        &&
        c_l = \sum_{s=1}^{k-1} \binom{l+s-1}{s-1}z^s
    \end{align*}
    Define the generating functions $\Phi(x)$ and $C(x)$ as follows:
    \begin{align*}
        \Phi(x) = \sum_{m=1}^\infty w_mx^m
        &&
        C(x) = \sum_{l=0}^\infty c_lx^l
    \end{align*}
    We first derive an explicit expression for $C(x)$:
    \begin{align*}
        C(x)
        &= \sum_{l=0}^\infty \sum_{s=1}^{k-1} \binom{l+s-1}{s-1}z^sx^l \\
        &= \sum_{s=1}^{k-1} z^s\sum_{l=0}^\infty \binom{l+s-1}{s-1}x^l \\
        &= \sum_{s=1}^{k-1} z^s(1-x)^{-s} \\
        &= \sum_{s=1}^{k-1} \qty(\frac{z}{1-x})^s \\
        &= \frac{y-y^k}{1-y} \tag{where $y=\frac{z}{1-x}$}
    \end{align*}
    Thus:
    \begin{equation}\label{soft_streak:weights_formula_using_generating_function:1+cx}
        1 + C(x)
        = 1 + \frac{y-y^k}{1-y}
        = \frac{1-y^k}{1-y}
    \end{equation}
    We now show that $\Phi(x)(1+C(x))=-\frac{z^kx}{1-x}$:
    \begin{align*}
        \Phi(x)\qty(1+C(x))
        &= \qty(\sum_{m=1}^\infty w_mx^m)\qty(1+\sum_{l=0}^\infty c_lx^l) \\
        &= \sum_{m=1}^\infty \qty(1+\sum_{l=0}^\infty c_lx^l)w_mx^m \\
        &= \sum_{m=1}^\infty w_mx^m + \sum_{m=1}^\infty \sum_{l=0}^\infty c_lw_mx^{m+l} \\
        &= \sum_{m=1}^\infty w_mx^m + \sum_{m=1}^\infty \sum_{q=m}^\infty c_{q-m}w_mx^q \tag{$q=m+j$} \\
        &= \sum_{m=1}^\infty w_mx^m + \sum_{q=1}^\infty \sum_{m=1}^q c_{q-m}w_mx^q \tag{swap order of sums} \\
        &= \sum_{q=1}^\infty w_qx^q + \sum_{q=1}^\infty \sum_{m=1}^q c_{q-m}w_mx^q \tag{replace $m$ with $q$ in the first term} \\
        &= \sum_{q=1}^\infty \qty(w_q + \sum_{m=1}^q c_{q-m}w_m)x^q \\
        &= \sum_{q=1}^\infty -z^kx^q \\
        &= -\frac{z^kx}{1-x}
    \end{align*}
    Using \eqref{soft_streak:weights_formula_using_generating_function:1+cx} and solving for $\Phi(x)$, we get:
    \begin{align*}
        \Phi(x)
        &= \frac{-z^kx}{1-x}\frac{1-y}{1-y^k} \\
        &= \frac{-z^kx}{1-x}\frac{1-\frac{z}{1-x}}{1-\qty(\frac{z}{1-x})^k} \\
        &= \frac{-z^kx}{1-x}\frac{1-x-z}{1-x}\frac{(1-x)^k}{(1-x)^k-z^k} \\
        &= \frac{-z^kx(1-x-z)(1-x)^{k-2}}{(1-x)^k-z^k}
    \end{align*}
    Since $(1-x)^k-z^k=\prod_{u=0}^{k-1} \qty(1-x-\omega_k^uz)$, we can cancel the factor of $1-x-z$:
    \begin{equation}\label{soft_streak:weights_formula_using_generating_function:eq1}
        \Phi(x) = \frac{-z^kx(1-x)^{k-2}}{\prod_{u=1}^{k-1} \qty(1-x-\omega_k^uz)}
    \end{equation}
    We now treat \eqref{soft_streak:weights_formula_using_generating_function:eq1} as a rational function of $x$ and decompose it into partial fractions:
    \begin{equation}\label{soft_streak:weights_formula_using_generating_function:eq2}
        \Phi(x) = z^k - \frac{z^k}{k}\sum_{s=1}^{k-1} \frac{\qty(1-\omega_k^{-s})\qty(1-\omega_k^sz)}{1-x-\omega_k^sz}
    \end{equation}
    Applying \Cref{soft_streak:rational_function_coefficient_lemma} to \eqref{soft_streak:weights_formula_using_generating_function:eq2}, we get (for all $m\in\N$):
    \begin{equation*}
        w_m = -\frac{z^k}{k}\sum_{s=1}^{k-1} \qty(1-\omega_k^{-s})\qty(1-\omega_k^sz)^{-m} \qedhere
    \end{equation*}
\end{proof}
\begin{remark}
    Note that \eqref{soft_streak:weights_formula_using_generating_function:eq2} has a constant term $z^k$, as the numerator and denominator of \eqref{soft_streak:weights_formula_using_generating_function:eq1} have the same degree $k-1$. However, substituting $x=0$ into \eqref{soft_streak:weights_formula_using_generating_function:eq2} yields $\Phi(0)=0$, which makes sense as (by definition) $\Phi(x)$ has no constant term.
\end{remark}

\begin{proposition}\label{soft_streak:weights_general_formula}
    For all $m\in\N$, we have:
    \begin{equation}\label{soft_streak:weights_general_formula_eq}
        w_m = -\frac{z^k}{(1-z^k)^m}\sum_{r=0}^{(k-1)m} \psi_{k,r}\Bs(m,k,r)z^r
    \end{equation}
\end{proposition}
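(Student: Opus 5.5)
This follows by combining \Cref{soft_streak:weights_formula_using_generating_function} with \Cref{psi_k_r:identity_2}, with $n$ replaced by $m$. The plan is first to take the closed form for $w_m$ from \Cref{soft_streak:weights_formula_using_generating_function}:
\begin{equation*}
    w_m = -\frac{z^k}{k}\sum_{s=1}^{k-1} \qty(1-\omega_k^{-s})\qty(1-\omega_k^sz)^{-m}.
\end{equation*}
Next, \Cref{psi_k_r:identity_2} is a polynomial/rational identity in $z$ valid for every nonnegative integer in place of $n$, so we may apply it with $n=m$. Since $m\in\N$, this gives
\begin{equation*}
    \frac{1}{k}\sum_{s=1}^{k-1} \qty(1-\omega_k^{-s})\qty(1-\omega_k^s z)^{-m}
    = \frac{1}{\qty(1-z^k)^m}\sum_{r=0}^{(k-1)m} \psi_{k,r}\Bs(m,k,r)z^r .
\end{equation*}
Here we divide both sides by $(1-z^k)^m$, which is legitimate as formal power series (or for $\abs{z}<1$) because its constant term is $1$.

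Substituting this into the expression for $w_m$ and multiplying by $-z^k$ yields \eqref{soft_streak:weights_general_formula_eq} directly.

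There is no real obstacle. The only point needing care is the convention in \Cref{psi_k_r:identity_2}: its summation variable $s$ is unrelated to $\Bs$'s arguments, and the identity must be used with its length parameter equal to $m$ rather than the alphabet size $n$. One should also note that the sum over $s$ in \Cref{psi_k_r:identity_2} runs over $1\le s\le k-1$, matching \Cref{soft_streak:weights_formula_using_generating_function}. The paper notes that the $s=0$ term would vanish anyway, since $1-\omega_k^0=0$.
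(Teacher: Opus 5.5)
Your proposal is correct and matches the paper's argument: the paper likewise obtains \eqref{soft_streak:weights_general_formula_eq} by substituting \Cref{psi_k_r:identity_2}, with the length parameter set to $m$, into the closed form of \Cref{soft_streak:weights_formula_using_generating_function}. Your remarks on dividing by $(1-z^k)^m$ and on the vanishing $s=0$ term are sound bookkeeping.
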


This follows directly by substituting \Cref{psi_k_r:identity_2} into \Cref{soft_streak:weights_formula_using_generating_function}.

\begin{proposition}\label{soft_streak:sum_of_weights}
    The weight $W(\Fs)$ of the set of all soft streaks is given by:
    \begin{equation*}
        W(\Fs) = 1 - nz - \frac{1}{\qty(1-z^k)^n}\sum_{r=0}^{(k-1)n} \psi_{k,r}\Bs(n,k,r)z^r
    \end{equation*}
\end{proposition}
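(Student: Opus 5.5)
By \Cref{soft_streak:w_depends_only_on_1}, every soft streak $\vb{a}=(a_1,\ldots,a_k)$ has $W(\vb{a})=w_{a_1}$: the front-runners of $\vb{a}$ only involve letters $\le a_1$, so $W(\vb{a})=W(a_1,a_1+1,\ldots,a_1+k-1)=w_{a_1}$, and this holds even when $a_1+k-1>n$. The streaks with first letter $m$ correspond to multisets of size $k-1$ drawn from $\{m,\ldots,n\}$, so there are $\binom{n-m+k-1}{k-1}$ of them. Hence
\begin{equation*}
    W(\Fs)=\sum_{m=1}^n \binom{n-m+k-1}{k-1} w_m .
\end{equation*}
I would substitute the closed form from \Cref{soft_streak:weights_formula_using_generating_function}, writing $t_s=(1-\omega_k^sz)^{-1}$. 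On the target side, \Cref{psi_k_r:identity_2} rewrites the claim as
\begin{equation*}
    \frac{z^k}{k}\sum_{s=1}^{k-1}\qty(1-\omega_k^{-s})\sum_{m=1}^n\binom{n-m+k-1}{k-1}t_s^m
    = nz-1+\frac1k\sum_{s=1}^{k-1}\qty(1-\omega_k^{-s})t_s^n .
\end{equation*}

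Since both sides are explicit in $n$, I would prove this as an identity of generating functions in a new variable $x$, summing over $n\ge 0$. The inner sum on the left is a convolution of $(t_s^m)_{m\ge1}$ with $\qty(\binom{j+k-1}{k-1})_{j\ge0}$. The left side therefore becomes
\begin{equation*}
    \frac{z^k}{k}\sum_s \qty(1-\omega_k^{-s})\frac{xt_s}{(1-xt_s)(1-x)^k},
\end{equation*}
and the right side becomes
\begin{equation*}
    \frac{zx}{(1-x)^2}-\frac{1}{1-x}+\frac1k\sum_s\qty(1-\omega_k^{-s})\frac{1}{1-xt_s}.
\end{equation*}
(The $n=0$ case reads $0=-1+\frac1k\sum_s(1-\omega_k^{-s})=0$, which is consistent.) Both sides are rational in $x$, so it suffices to compare their poles and their behaviour at infinity.

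The poles at $x=1/t_s=1-\omega_k^sz$ are handled as follows. At such a point $(1-x)^k=\omega_k^{sk}z^k=z^k$ and $xt_s=1$. The singular part of the $s$-th left term is thus exactly $\frac1k(1-\omega_k^{-s})/(1-xt_s)$, which matches the right side. The difference $D(x)$ of the two sides therefore has at most a pole at $x=1$. Both sides vanish as $x\to\infty$: the left is $O(x^{-k})$, the right is $O(x^{-1})$. So $D$ is a finite sum of terms $(1-x)^{-j}$, and it remains to show that the principal part at $x=1$ of the left side equals $\frac{zx}{(1-x)^2}-\frac1{1-x}$.

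This principal-part computation is the main obstacle. I would expand $\frac{xt_s}{1-xt_s}$ around $x=1$. Putting $u=1-x$ and $t_s^{-1}=1-\omega_k^sz$ gives
\begin{equation*}
    \frac{xt_s}{1-xt_s}=\frac{1-u}{u-\omega_k^sz}=-\frac{1-u}{\omega_k^sz}\sum_{j\ge0}\qty(\frac{u}{\omega_k^sz})^j .
\end{equation*}
Multiplying by $z^ku^{-k}$, the coefficient of $u^{-i}$ involves $\sum_s(1-\omega_k^{-s})\omega_k^{-(j+1)s}$. This character sum equals $k$ times $[j+1\equiv0]-[j+2\equiv0]$ modulo $k$, up to the $s=0$ correction, which vanishes because of the factor $(1-\omega_k^{0})$. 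Hence only the terms with $j=k-1$ and $j=k-2$ survive among the negative powers. Carrying out the bookkeeping should yield exactly $-u^{-1}+z(1-u)u^{-2}$, that is $\frac{zx}{(1-x)^2}-\frac{1}{1-x}$. This forces $D\equiv0$, and comparing coefficients of $x^n$ proves the proposition.
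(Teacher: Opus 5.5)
Your proof is correct and follows essentially the paper's route: the same reduction to $\sum_{m=1}^{n}\binom{n-m+k-1}{k-1}w_m$, the same generating function over the alphabet size (namely $\Phi(x)/(1-x)^k$), and coefficient extraction from its partial-fraction expansion. The only difference is that the paper starts from the factored form of $\Phi(x)$ and simply asserts the decomposition $-\frac{z}{(1-x)^2}+\frac{1+z}{1-x}-\frac{1}{k}\sum_{s}\frac{(1-\omega_k^{-s})(1-\omega_k^{s}z)}{1-x-\omega_k^{s}z}$, whereas your principal-part comparison actually verifies it: you evaluate directly at the simple poles $x=1-\omega_k^{s}z$, and at the order-$k$ pole $x=1$ your roots-of-unity filter leaves only $j=k-1$ and $j=k-2$, which indeed give $zu^{-2}-(1+z)u^{-1}$.
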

\begin{proof}
    In this proof, we will use a generating function indexed by $n$ (the size of the alphabet), so for clarity, we will denote the set of forbidden words for each $n$ by $\Fs_n$ instead of $\Fs$. By \Cref{soft_streak:w_depends_only_on_1}, we have:
    \begin{equation*}
        W(\Fs_n) = \sum_{s=1}^n \binom{n+k-s-1}{k-1}w_s
    \end{equation*}
    Define $\Omega(x)=\sum_{n=1}^\infty W(\Fs_n)x^n$. Further manipulation yields:
    \begin{align*}
        \Omega(x)
        &= \sum_{n=1}^\infty \sum_{s=1}^n \binom{n+k-s-1}{k-1}w_sx^n \\
        &= \sum_{s=1}^\infty \sum_{n=s}^\infty \binom{n+k-s-1}{k-1}w_sx^n \tag{swap order of sums} \\
        &= \sum_{s=1}^\infty w_sx^s\sum_{n=s}^\infty \binom{n+k-s-1}{k-1}x^{n-s} \\
        &= \sum_{s=1}^\infty w_sx^s\sum_{q=0}^\infty \binom{q+k-1}{k-1}x^q \tag{$q=n-s$} \\
        &= \sum_{s=1}^\infty w_sx^s\sum_{q=0}^\infty \binom{q+k-1}{k-1}x^q \\
        &= \sum_{s=1}^\infty w_sx^s(1-x)^{-k} \\
        &= \frac{\Phi(x)}{(1-x)^k}
    \end{align*}
    Using \eqref{soft_streak:weights_formula_using_generating_function:eq1}, we get:
    \begin{equation}\label{soft_streak:sum_of_weights:eq1}
        \Omega(x) = \frac{-z^kx}{(1-x)^2\prod_{u=1}^{k-1} \qty(1-x-\omega_k^uz)}
    \end{equation}
    We now treat \eqref{soft_streak:sum_of_weights:eq1} as a rational function of $x$ and decompose it into partial fractions:
    \begin{equation*}
        \Omega(x) = -\frac{z}{(1-x)^2} + \frac{1+z}{1-x} - \frac{1}{k}\sum_{s=1}^{k-1} \frac{\qty(1-\omega_k^{-s})\qty(1-\omega_k^sz)}{1-x-\omega_k^sz}
    \end{equation*}
    We now expand each term as a power series in $x$ and apply \Cref{soft_streak:rational_function_coefficient_lemma} to the last term:
    \begin{align*}
        \Omega(x)
        &= -z\sum_{n=0}^\infty (n+1) x^n + (1+z)\sum_{n=0}^\infty x^n - \frac{1}{k}\sum_{n=0}^\infty \sum_{s=1}^{k-1} \qty(1-\omega_k^{-s})\qty(1-\omega_k^sz)^{-n}x^n \\
        &= (1-nz)\sum_{n=0}^\infty x^n - \frac{1}{k}\sum_{n=0}^\infty \sum_{s=1}^{k-1} \qty(1-\omega_k^{-s})\qty(1-\omega_k^sz)^{-n}x^n \\
        &= \sum_{n=0}^\infty \qty((1-nz) - \frac{1}{k}\sum_{s=1}^{k-1} \qty(1-\omega_k^{-s})\qty(1-\omega_k^sz)^{-n})x^n
    \end{align*}
    Thus:
    \begin{equation*}
        W(\Fs_n)
        = 1 - nz - \frac{1}{k}\sum_{s=1}^{k-1} \qty(1-\omega_k^{-s})\qty(1-\omega_k^sz)^{-n} \qedhere
    \end{equation*}
\end{proof}

\begin{corollary}
    The weight $W(\Fs)$ of the set of all soft streaks is given by:
    \begin{equation*}
        W(\Fs) = 1 - nz - \frac{1}{\qty(1-z^k)^n}\sum_{r=0}^{(k-1)n} \psi_{k,r}\Bs(n,k,r)z^r
    \end{equation*}
\end{corollary}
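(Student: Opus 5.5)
This corollary follows from \Cref{soft_streak:sum_of_weights} together with \Cref{psi_k_r:identity_2}. Note that \Cref{soft_streak:sum_of_weights} is stated in the $\psi$--$\Bs$ form, but its proof only establishes the root-of-unity expression
\begin{equation*}
    W(\Fs) = 1 - nz - \frac{1}{k}\sum_{s=1}^{k-1} \qty(1-\omega_k^{-s})\qty(1-\omega_k^sz)^{-n}.
\end{equation*}
So the plan is to start from this last displayed line of that proof and convert the trailing sum into the polynomial form.

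To do the conversion, I apply \Cref{psi_k_r:identity_2} with the alphabet size $n$ in place of its exponent. Dividing both sides of that identity by $\qty(1-z^k)^n$ gives
\begin{equation*}
    \frac{1}{k}\sum_{s=1}^{k-1} \qty(1-\omega_k^{-s})\qty(1-\omega_k^sz)^{-n}
    = \frac{1}{\qty(1-z^k)^n}\sum_{r=0}^{(k-1)n} \psi_{k,r}\Bs(n,k,r)z^r.
\end{equation*}
This division is legitimate as an identity of formal power series in $z$, because $1-z^k$ has constant term $1$ and is therefore invertible. Substituting the right-hand side into the expression for $W(\Fs)$ gives exactly the claimed formula.

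There is no real obstacle here. The only points to check are that the index conventions match and that \Cref{psi_k_r:identity_2} holds for every $n\in\N$, including small $n$ such as $n=1$. It does, since it is quoted from \cite{sekhon} for general $n$. One should also confirm that the $s=0$ term, which the paper says may be included or omitted freely, does not affect the match. It does not: that term is $\qty(1-\omega_k^{0})\qty(1-z)^{-n}=0$.
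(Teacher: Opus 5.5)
Your proposal is correct and follows the paper's own argument: you divide \Cref{psi_k_r:identity_2} by $(1-z^k)^n$ and substitute it into the root-of-unity expression $1-nz-\frac{1}{k}\sum_{s=1}^{k-1}(1-\omega_k^{-s})(1-\omega_k^s z)^{-n}$ derived in the proof of \Cref{soft_streak:sum_of_weights}. You were also right to start from the last line of that proof rather than from its printed statement, which (apparently by a copying slip) already displays the polynomial form.
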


This follows directly by substituting \Cref{psi_k_r:identity_2} into \Cref{soft_streak:sum_of_weights}.\\

Finally, using the formula $f(z)=\frac{1}{1-nz-W(\Fs)}$ from the Goulden-Jackson method, we get the following expressions for the generating function $f(z)$:

\begin{theorem}\label{soft_streak:generating_function}
    The generating function $f(z)$ is given by:
    \begin{align}
        f(z)
        &= \frac{\qty(1-z^k)^n}{\Disp\sum_{r=0}^{(k-1)n} \psi_{k,r}\Bs(n,k,r)z^r}
        \label{soft_streak:generating_function:eq1} \\
        &= \frac{k}{\Disp\sum_{s=1}^{k-1} \qty(1-\omega_k^{-s})\qty(1-\omega_k^s z)^{-n}}
        \label{soft_streak:generating_function:eq2}
    \end{align}
\end{theorem}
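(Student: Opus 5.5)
The plan is to plug the weight computed in \Cref{soft_streak:sum_of_weights} into the Goulden--Jackson formula $f(z)=\frac{1}{1-nz-W(\Fs)}$. By \Cref{soft_streak:sum_of_weights}, in the form obtained at the end of its proof,
\begin{equation*}
    W(\Fs) = 1 - nz - \frac{1}{k}\sum_{s=1}^{k-1} \qty(1-\omega_k^{-s})\qty(1-\omega_k^sz)^{-n}.
\end{equation*}
Hence
\begin{equation*}
    1-nz-W(\Fs)=\frac{1}{k}\sum_{s=1}^{k-1}\qty(1-\omega_k^{-s})\qty(1-\omega_k^sz)^{-n}.
\end{equation*}
Inverting this gives \eqref{soft_streak:generating_function:eq2} directly. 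This is valid as an identity of formal power series in $z$, because the constant term of the sum is $\sum_{s=1}^{k-1}(1-\omega_k^{-s}) = (k-1)-(-1)=k\neq 0$. The sum is therefore invertible, and $f(0)=1$, as it should be.

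For \eqref{soft_streak:generating_function:eq1}, I will use \Cref{psi_k_r:identity_2}. It states that the same sum equals $\frac{k}{(1-z^k)^n}\sum_{r=0}^{(k-1)n}\psi_{k,r}\Bs(n,k,r)z^r$. Substituting this into \eqref{soft_streak:generating_function:eq2}, the factors of $k$ cancel and the factor $(1-z^k)^n$ moves to the numerator, giving \eqref{soft_streak:generating_function:eq1}. Equivalently, I could substitute the corollary following \Cref{soft_streak:sum_of_weights} into $1-nz-W(\Fs)$; the $1-nz$ terms cancel in the same way.

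There is no real obstacle here, since all the work was done in \Cref{soft_streak:sum_of_weights}. The only point needing care is the bookkeeping of the $1-nz$ terms. The partial-fraction step in \Cref{soft_streak:sum_of_weights} has to produce exactly $1-nz$ as the coefficient of $x^n$ from $-\frac{z}{(1-x)^2}+\frac{1+z}{1-x}$. I will double-check this: the coefficient is $-z(n+1)+(1+z)=1-nz$, so it cancels cleanly against the $1-nz$ in the Goulden--Jackson denominator. As a sanity check I will verify the case $k=2$. There $\omega_2=-1$ and the sum is $2(1+z)^{-n}$, which gives $f(z)=(1+z)^n$. This is correct: words with no non-decreasing pair are strictly decreasing, and there are $\binom{n}{s}$ of length $s$.
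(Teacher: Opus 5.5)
Your proposal is correct and follows essentially the same route as the paper. You substitute the weight from \Cref{soft_streak:sum_of_weights}, in its $\omega_k$ form and, via \Cref{psi_k_r:identity_2}, in its $\psi_{k,r}\Bs(n,k,r)$ form, into the Goulden--Jackson formula $f(z)=1/(1-nz-W(\Fs))$, so that the $1-nz$ terms cancel; your added checks (the constant term $k$, which makes the inversion legitimate, and the case $k=2$) are sound.
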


This proves the conjecture in \cite[\S4]{sekhon}.

\begin{remark}[Special case $k=2$]
    If $k=2$, there is a shortcut: A word does not contain a soft streak of length $2$ if and only if it is strictly decreasing. The number of strictly decreasing words of length $s$ is simply the number of combinations of $\{1,2,\ldots,n\}$ of length $s$, i.e. $\binom{n}{s}$. Thus the generating function is given by:
    \begin{equation*}
        f(z)
        = \sum_{s=0}^\infty \binom{n}{s}z^s
        = (1+z)^n
    \end{equation*}
    This agrees with \eqref{soft_streak:generating_function:eq1} and \eqref{soft_streak:generating_function:eq2} as $\psi_{2,r}=(-1)^r$, $\omega_2=-1$ and $\Bs(n,2,r)=\binom{n}{r}$.\\
    
    Note that this series terminates at $s=n$. This makes sense, since if $s>n$, you cannot avoid a soft streak of length $2$ (the longest strictly decreasing word is $(n,n-1,\ldots,2,1)$ which has length $n$).
\end{remark}

\section{Application to Random Sampling}

In \cite[\S5]{sekhon}, we proved the following result:

\begin{theorem}[Expected number of draws to get a streak]\label{streak:expected_value}
    Suppose consecutive letters are drawn from $\{1,2,\ldots,n\}$ until a streak of $k$ is obtained. Then the expected number of letters drawn is given by:
    \begin{align}\label{streak:expected_value_eq}
        E(n,k)
        &= \frac{1}{\Disp\sum_{r=0}^n \psi_{k,r}\binom{n}{r}n^{-r}} \\
    \label{streak:expected_value_eq2}
        &= \frac{k}{\Disp\sum_{s=1}^{k-1} \qty(1-\omega_k^{-s})\qty(1+\frac{\omega_k^s}{n})^n}
    \end{align}
\end{theorem}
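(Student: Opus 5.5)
The statement concerns \emph{strict} streaks (strictly increasing subwords of length $k$), and was proved in \cite[\S5]{sekhon}. I would re-derive it from the generating function for strictly increasing words in \cite{sekhon}, which has the same shape as \Cref{soft_streak:generating_function} but with $(1+\omega_k^s z)^n$ in place of $(1-\omega_k^s z)^{-n}$. So I take
\begin{equation*}
    g(z) = \frac{k}{\Disp\sum_{s=1}^{k-1} \qty(1-\omega_k^{-s})\qty(1+\omega_k^s z)^n} = \frac{1}{\Disp\sum_{r=0}^{n} \psi_{k,r}\binom{n}{r}z^r},
\end{equation*}
where $g(z)$ counts $n$-ary words avoiding a streak of length $k$.

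\textbf{Step 1: express $E(n,k)$ through the survival probabilities.} Let $T$ be the number of letters drawn. The event $T>s$ says exactly that the first $s$ letters contain no streak of length $k$. Since all $n^s$ words of length $s$ are equally likely, $P(T>s)$ equals the number of streak-avoiding words of length $s$ divided by $n^s$. Therefore
\begin{equation*}
    E(n,k) = \sum_{s\ge0} P(T>s) = g\qty(\tfrac{1}{n}).
\end{equation*}

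\textbf{Step 2: substitute $z=1/n$.} Both closed forms of $g$ then give \eqref{streak:expected_value_eq} and \eqref{streak:expected_value_eq2} directly. The equality between the two forms at $z=1/n$ is the strict analogue of \Cref{psi_k_r:identity_2}, namely the roots-of-unity filter
\begin{equation*}
    \frac1k\sum_{s=0}^{k-1}\qty(1-\omega_k^{-s})\omega_k^{sr}=\psi_{k,r}.
\end{equation*}
The $s=0$ term of this sum vanishes, so the filter can be summed over $s=1,\dots,k-1$ as in the formulas.

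\textbf{Main obstacle: justifying the substitution $z=1/n$.} The series identity for $g$ holds formally, so I must check that the power series converges at $z=1/n$ and equals the rational function there.
\begin{itemize}
    \item Convergence: the series has nonnegative coefficients, and the count of avoiding words decays geometrically relative to $n^s$. Indeed, each disjoint block of $k$ letters is a streak with probability at least $n^{-k}$, so $P(T>s)$ decays geometrically. Hence the radius of convergence exceeds $1/n$.
    \item Agreement with the rational function: by Pringsheim's theorem, the positive-real singularity of the series lies beyond $1/n$. The rational function has no pole on $[0,1/n]$, since its denominator equals $1/g$, which is positive on that segment. By analytic continuation, the series and the rational function agree at $z=1/n$.
\end{itemize}
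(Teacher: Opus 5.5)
Your proposal is correct and follows essentially the paper's route: write $E(n,k)=\sum_{s\ge0}P(T>s)=g(1/n)$ for the strict-streak generating function of \cite{sekhon}, then substitute $z=1/n$ into its two closed forms; the only difference is that you prove convergence at $1/n$ yourself by the disjoint-block bound rather than citing \cite[Theorem A.6]{sekhon}. Two small remarks: the block bound needs $k\le n$ (for $k>n$ no streak exists, $E=\infty$, and the denominator $1-n\cdot\frac1n$ vanishes), and Pringsheim is unnecessary, since once the radius of convergence exceeds $1/n$ the Cauchy product turns the formal identity $g(z)Q(z)=1$, with $Q(z)=\sum_{r=0}^{n}\psi_{k,r}\binom{n}{r}z^r$, directly into a numerical identity at $z=1/n$.
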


The analogous result for \emph{soft} streaks was stated and left as a conjecture:

\begin{theorem}[Expected number of draws to get a soft streak]\label{soft_streak:expected_value}
    Suppose consecutive letters are drawn from $\{1,2,\ldots,n\}$ until a soft streak of $k$ is obtained. Then the expected number of letters drawn is given by:
    \begin{align}\label{soft_streak:expected_value_eq}
        E_{\textsf{soft}}(n,k)
        &= \frac{\qty(1-n^{-k})^n}{\Disp\sum_{r=0}^{(k-1)n} \psi_{k,r}\Bs(n,k,r)n^{-r}} \\
    \label{soft_streak:expected_value_eq2}
        &= \frac{k}{\Disp\sum_{s=1}^{k-1} \qty(1-\omega_k^{-s})\qty(1-\frac{\omega_k^s}{n})^{-n}}
    \end{align}
\end{theorem}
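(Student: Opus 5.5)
The plan is to express the expected waiting time as a sum of tail probabilities and then evaluate that sum with the generating function of \Cref{soft_streak:generating_function} at $z=1/n$. Let $T$ be the number of letters drawn until a soft streak of length $k$ first appears. Then $T>s$ exactly when the first $s$ letters form an $n$-ary word with no soft streak of length $k$. All $n^s$ words are equally likely, so
\begin{equation*}
    \Pr(T>s)=\Ps(n,k,s)\,n^{-s}.
\end{equation*}
Since $T$ takes values in $\N_0$, we have $E(T)=\sum_{s=0}^\infty \Pr(T>s)$, and therefore
\begin{equation*}
    E_{\textsf{soft}}(n,k)=\sum_{s=0}^\infty \Ps(n,k,s)\,n^{-s}=f_{n,k}(1/n),
\end{equation*}
provided this series converges. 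Substituting $z=1/n$ into \eqref{soft_streak:generating_function:eq1} gives \eqref{soft_streak:expected_value_eq} directly. Substituting it into \eqref{soft_streak:generating_function:eq2} gives \eqref{soft_streak:expected_value_eq2}. The two closed forms also agree with each other by \Cref{psi_k_r:identity_2} evaluated at $z=1/n$.

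The main obstacle is justifying the evaluation at $z=1/n$: we need $1/n$ to lie inside the disc of convergence of $f_{n,k}$. Equivalently, the radius of convergence must strictly exceed $1/n$, or at least the series must converge there to the value of the rational function.

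I would handle this with a block argument. Split the word into disjoint consecutive blocks of length $k$. Each block is independently a soft streak with probability $p=\binom{n+k-1}{k}n^{-k}>0$. Hence
\begin{equation*}
    \Ps(n,k,s)\,n^{-s}=\Pr(T>s)\le (1-p)^{\lfloor s/k\rfloor}.
\end{equation*}
This geometric decay shows that $\sum_s \Ps(n,k,s)z^s$ converges absolutely for $\abs{z}<n^{-1}(1-p)^{-1/k}$, a disc that contains $1/n$. Inside this disc the power series equals the rational function given by \Cref{soft_streak:generating_function}, because they agree as formal power series and the series converges there, so the identity holds analytically.

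Finally, I would check that the denominator of the rational expression does not vanish at $z=1/n$. This is automatic, since the series converges there to a finite value of at least $1$, coming from the $s=0$ term. Combining these steps completes the proof.
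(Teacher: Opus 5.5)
Your proposal is correct and follows the paper's argument: substitute $z=1/n$ into both forms of \Cref{soft_streak:generating_function}. The difference is that you prove directly the two facts the paper imports from \cite{sekhon}, namely that $E_{\textsf{soft}}(n,k)=f_{n,k}(1/n)$ and that the series converges there (cited as \cite[Theorem A.6]{sekhon}), using the tail-sum formula and the block bound $(1-p)^{\lfloor s/k\rfloor}$ on the tail probabilities. One small fix to your last step: finiteness of the series only rules out a pole of the \emph{reduced} rational function, so nonvanishing of the denominator $D$ of \eqref{soft_streak:expected_value_eq} should be deduced from $f_{n,k}(1/n)\,D(1/n)=(1-n^{-k})^n\neq 0$ for $n\ge 2$; for $n=1$ that expression is genuinely $0/0$, and only \eqref{soft_streak:expected_value_eq2} gives the correct value $k$.
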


This now follows directly by substituting $z=\frac{1}{n}$ into \eqref{soft_streak:generating_function:eq1} and \eqref{soft_streak:generating_function:eq2} respectively. Note that evaluating the generating function at $\frac{1}{n}$ is valid, as the resulting series is guaranteed to converge by \cite[Theorem A.6]{sekhon}.

\subsection{Continuous limit}

Taking the limit as $n\to\infty$ in both \eqref{streak:expected_value_eq2} and \eqref{soft_streak:expected_value_eq2} yield the same result:
\begin{align*}
    \limn E(n,k) = \limn E_{\textsf{soft}}(n,k) = \frac{k}{\Disp\sum_{s=1}^{k-1} e^{\omega_k^s}\qty(1-\omega_k^{-s})}
\end{align*}

This expression appears in \cite[\S6]{sekhon} and is denoted by $\mu_k$. It represents the expected number of draws from an absolutely continuous distribution to get a streak of $k$ (or a soft streak, the difference vanishes in the continuous case). We also conjectured (from numerical evidence) that $\mu_k=k!+(k-1)!+o(1)$ as $k\to\infty$.

\begin{lemma}\label{mu_k_asymptotic_lemma}
    \begin{equation*}
        \mu_k = \frac{1}{k\sumn\dfrac{n}{(nk+1)!}}
    \end{equation*}
\end{lemma}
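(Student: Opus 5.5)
The plan is to compute the denominator $\sum_{s=1}^{k-1} e^{\omega_k^s}\qty(1-\omega_k^{-s})$ in the expression for $\mu_k$ directly, by expanding the exponential as a power series and using the orthogonality of roots of unity. Specifically, I will show that
\begin{equation*}
    \sum_{s=1}^{k-1} e^{\omega_k^s}\qty(1-\omega_k^{-s}) = k^2\sumn\frac{n}{(nk+1)!},
\end{equation*}
from which the lemma follows immediately since $\mu_k = k/\bigl(k^2\sumn \frac{n}{(nk+1)!}\bigr)$.

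First, observe that the term $s=0$ would contribute $e\cdot(1-1)=0$, so the sum may be extended to $s\in\{0,1,\ldots,k-1\}$ at no cost. Next, substitute $e^{\omega_k^s}=\sum_{m=0}^\infty \omega_k^{sm}/m!$ and distribute, so that $e^{\omega_k^s}\qty(1-\omega_k^{-s}) = \sum_{m=0}^\infty \frac{\omega_k^{sm}-\omega_k^{s(m-1)}}{m!}$. Since the sum over $s$ is finite and each series converges absolutely, the two sums can be interchanged. Applying the standard identity $\sum_{s=0}^{k-1}\omega_k^{sj}=k$ if $k\mid j$ and $0$ otherwise (valid for all $j\in\Z$, including $j=-1$ when $m=0$) gives
\begin{equation*}
    \sum_{s=0}^{k-1} e^{\omega_k^s}\qty(1-\omega_k^{-s}) = k\qty(\sum_{n=0}^\infty \frac{1}{(nk)!} - \sum_{n=0}^\infty \frac{1}{(nk+1)!}).
\end{equation*}

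Finally, combine the two series termwise by writing $\frac{1}{(nk)!}=\frac{nk+1}{(nk+1)!}$. This turns the bracket into $\sum_{n=0}^\infty \frac{nk}{(nk+1)!}$. The $n=0$ term vanishes, leaving $k\sumn \frac{n}{(nk+1)!}$, so the whole sum equals $k^2\sumn\frac{n}{(nk+1)!}$ as claimed.

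There is no real obstacle here. The only points requiring care are the justification for interchanging the finite sum over $s$ with the power series, which follows from absolute convergence, and checking the edge case $m=0$, where the exponent $-1$ is not divisible by $k$ because $k\ge 2$. The identity also shows that the denominator is positive, so $\mu_k$ is well defined.
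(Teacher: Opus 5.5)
Your proposal is correct and matches the paper's proof step for step: you expand $e^{\omega_k^s}$ as a power series, swap the finite sum with the series, reduce to $k\bigl(\sum_{n\ge0}\frac{1}{(nk)!}-\sum_{n\ge0}\frac{1}{(nk+1)!}\bigr)$, and then combine termwise. The only difference is that you derive the root-of-unity filter yourself, by adding the zero $s=0$ term and using orthogonality, whereas the paper cites Lemma~1.2 of \cite{sekhon} for $\sum_{s=1}^{k-1}\omega_k^{st}(1-\omega_k^{-s})=k\psi_{k,t}$.
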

\begin{proof}
    \begin{align*}
        \frac{k}{\mu_k}
        &= \sum_{s=1}^{k-1} e^{\omega_k^s}\qty(1-\omega_k^{-s}) \\
        &= \sum_{s=1}^{k-1} \sum_{t=0}^\infty \frac{\omega_k^{st}}{t!}\qty(1-\omega_k^{-s}) \\
        &= \sum_{t=0}^\infty \frac{1}{t!}\sum_{s=1}^{k-1} \omega_k^{st}\qty(1-\omega_k^{-s}) \\
        &= k\sum_{t=0}^\infty \frac{1}{t!}\psi_{k,t} \tag{by \cite[Lemma 1.2]{sekhon}}
    \end{align*}
    By the definition of $\psi_{k,t}$, we can split the outer sum into one sum containing the values of $t$ congruent to $0 \pmod{k}$, where we set $t=kn$, and another containing those congruent to $1 \pmod{k}$, where we set $t=kn+1$.
    \begin{align*}
        \frac{k}{\mu_k}
        &= k\qty(\sum_{n=0}^\infty \frac{1}{(kn)!} - \sum_{n=0}^\infty \frac{1}{(kn+1)!}) \\
        &= k\sum_{n=0}^\infty \qty(\frac{1}{(kn)!}-\frac{1}{(kn+1)!}) \\
        &= k\sum_{n=0}^\infty \frac{(kn+1)-1}{(kn+1)!} \\
        &= k^2\sum_{n=0}^\infty \frac{n}{(kn+1)!}
    \end{align*}
    Thus:
    \begin{equation*}
        \mu_k = \frac{1}{k\sum_{n=1}^\infty \dfrac{n}{(kn+1)!}} \qedhere
    \end{equation*}
\end{proof}

\begin{theorem}
    \begin{equation*}
        \mu_k = k!+(k-1)!+o(1) \qq{as} k\to\infty
    \end{equation*}
\end{theorem}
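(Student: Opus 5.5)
By \Cref{mu_k_asymptotic_lemma} we have $\mu_k = 1/(k S_k)$ with $S_k=\sum_{n\ge1} \frac{n}{(nk+1)!}$. The plan is to expand $S_k$ from its first two terms, invert, and check that every discarded term contributes $o(1)$.

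First, split off the dominant term:
\begin{equation*}
    kS_k = \frac{k}{(k+1)!} + \frac{2k}{(2k+1)!} + R_k, \qquad R_k = k\sum_{n\ge3}\frac{n}{(nk+1)!}.
\end{equation*}
Write $a_k = \frac{k}{(k+1)!}$. Then
\begin{equation*}
    \mu_k = \frac{1}{a_k}\cdot\frac{1}{1+\varepsilon_k}, \qquad \varepsilon_k = \frac{1}{a_k}\qty(\frac{2k}{(2k+1)!}+R_k).
\end{equation*}
Note that $1/a_k = (k+1)!/k = (k+1)(k-1)! = k!+(k-1)!$, which is exactly the main term in the statement. It therefore suffices to show that $\frac{1}{a_k}\qty(1-\frac{1}{1+\varepsilon_k}) = \frac{1}{a_k}\frac{\varepsilon_k}{1+\varepsilon_k}\to0$.

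Next, bound $\varepsilon_k/a_k \approx \qty(\frac{2k}{(2k+1)!}+R_k)/a_k^2$. We have $a_k^{-2} = \qty((k+1)!/k)^2 \le ((k+1)!)^2$. The second term gives
\begin{equation*}
    \frac{2k\,((k+1)!)^2}{(2k+1)!} = 2k\cdot\frac{(k+1)!\,(k+1)!}{(2k+1)!} = \frac{2k(k+1)^2}{\binom{2k+1}{k}} \cdot \frac{1}{k+1},
\end{equation*}
which is polynomial in $k$ divided by a central binomial coefficient of size about $4^k/\sqrt{k}$, hence $\to0$. For the tail, the summands decay super-geometrically: for $n\ge3$, the ratio of consecutive terms is at most $2/(3k)$. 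So $R_k \le 2k\cdot\frac{3}{(3k+1)!}$, and $R_k\,((k+1)!)^2 \to 0$ even faster, since $(3k+1)!$ dominates $((k+1)!)^2$ by a factor exponential in $k$ times a huge factorial ratio. Since $\varepsilon_k\to0$, the factor $1/(1+\varepsilon_k)$ is bounded, and the claim follows.

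The only real obstacle is bookkeeping of the second-order term. One has to confirm that the correction is $O\qty(((k+1)!)^2/(2k)!)$ rather than something like $O(1/k)\cdot k!$. The comparison $((k+1)!)^2 \ll (2k+1)!/k$ via $\binom{2k+1}{k}\ge 2^{2k}/(2k+2)$ handles this cleanly. Along the way I will double-check the off-by-one in the index: the lemma's final formula starts at $n=1$, which is consistent because the $n=0$ term vanishes.
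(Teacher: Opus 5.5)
Your proof is correct and is essentially the paper's argument. Your $\varepsilon_k$ is exactly the paper's $\alpha_k=\sum_{n\ge2} n(k+1)!/(kn+1)!$, and both proofs reduce the claim to $(k!+(k-1)!)\,\alpha_k\to0$.

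The only difference is how $\alpha_k$ is estimated. The paper bounds each of the $k(n-1)$ factors of $(k+2)(k+3)\cdots(kn+1)$ below by $k$, giving $\alpha_k=O(k^{-k})$, and then uses $k!=o(k^k)$. You instead isolate the $n=2$ term via $\binom{2k+1}{k}\ge 2^{2k}/(2k+2)$ and handle $n\ge3$ with a ratio bound. That takes a little more bookkeeping, but it captures the true exponential rate $4^{-k}$ of the error (which is in fact $\sim\sqrt{\pi k}\,4^{-k}$), whereas the paper's cruder but sufficient bound only gives $O(\sqrt{k}\,e^{-k})$.
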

\begin{proof}
    By \Cref{mu_k_asymptotic_lemma}, we have:
    \begin{equation*}
        \frac{1}{k\mu_k}
        = \sumn \frac{n}{(kn+1)!}
        = \frac{1}{(k+1)!} + \frac{1}{(k+1)!}\underbrace{\sumn[2] \frac{n(k+1)!}{(kn+1)!}}_{\alpha_k}
    \end{equation*}
    This yields:
    \begin{equation*}
        \mu_k
        = \frac{1}{k\qty(\frac{1}{(k+1)!}+\frac{\alpha_k}{(k+1)!})}
        = \frac{(k+1)!}{k}\frac{1}{1+\alpha_k}
        = \qty(k!+(k-1)!)\frac{1}{1+\alpha_k}
    \end{equation*}
    It now suffices to show that $\limk k!\alpha_k=0$.
    \begin{equation*}
        \alpha_k
        = \sumn[2] \frac{n(k+1)!}{(kn+1)!}
        = \sumn[2] \frac{n}{(k+2)(k+3)\cdots (kn+1)}
        < \sumn[2] \frac{n}{k^{(n-1)k}}
    \end{equation*}
    Note that for all $a>1$, we have:
    \begin{equation*}
        \sumn[2] \frac{n}{a^{n-1}} = \frac{2a-1}{(a-1)^2} = \order{\frac{1}{a}} \qq{as} a\to\infty
    \end{equation*}
    Setting $a=k^k$, we get:
    \begin{align*}
        \alpha_k = \order{\frac{1}{k^k}} \qq{as} k\to\infty
    \end{align*}
    Finally, since $k!=o\qty(k^k)$ as $k\to\infty$, we have $\limk k!\alpha_k=0$.
\end{proof}

\subsection*{Acknowledgments}

I would like to thank Daniel Isaac Puignau Chacón and Diego Chicharro Gordo for their contributions.

\printbibliography

@article{sekhon,
    author      = {Sekhon, Senan},
    title       = {Counting words without strictly increasing subwords of fixed length},
    year        = {2025},
    url         = {https://arxiv.org/abs/2511.13287},
}

\end{document}